\documentclass[a4paper]{scrartcl}

\usepackage{graphicx}
\usepackage[utf8]{inputenc}
\usepackage[english]{babel}

\usepackage{amsmath,amssymb,enumerate,url,appendix,tabularx,stmaryrd,mathrsfs}

\usepackage{todonotes,comment,afterpage}
\usepackage[compatibility=false]{caption}
\usepackage{subcaption}
\usepackage{framed}
\usepackage{fullpage}
\usepackage{dsfont}

\usepackage{amsthm}
\usepackage{graphicx}

\theoremstyle{plain}
\newtheorem{theorem}{Theorem}[section]
\newtheorem{lemma}[theorem]{Lemma}
\newtheorem{corollary}[theorem]{Corollary}
\newtheorem{proposition}[theorem]{Proposition}

\newtheorem{assumption}[theorem]{Assumption}

\newtheorem{definition}[theorem]{Definition}

\newtheorem{remark}[theorem]{Remark}

\numberwithin{equation}{section}
\numberwithin{figure}{section}
\numberwithin{table}{section}

\newcommand{\R}{\mathbb{R}}
\newcommand{\C}{\mathbb{C}}

\newcommand{\bigO}{\mathcal{O}}

\newcommand{\abs}[1]{\left|#1\right|}
\newcommand{\norm}[1]{\left\|#1\right\|}

\newcommand{\eremk}{\hbox{}\hfill\rule{0.8ex}{0.8ex}} 

\newcommand{\colvec}[1]{\begin{pmatrix}#1\end{pmatrix}}

\newcommand{\support}{\operatorname{supp}}

\newcommand{\laplace}{\Delta}

\newcommand{\dualproduct}[2]{\left<#1,#2\right>_{\Gamma}}
\newcommand{\ltwoproduct}[2]{\left(#1,#2\right)_{\Omega^\pm}}

\newcommand{\tracejump}[1]{{\left\llbracket \gamma #1 \right \rrbracket}}
\newcommand{\normaljump}[1]{{\left\llbracket \partial_n #1 \right \rrbracket}}

\newcommand{\tracemean}[1]{\left\{\!\!\left\{ \gamma #1 \right\}\!\! \right\}}

\newcommand{\rkA}{A}
\newcommand{\rkb}{b}
\newcommand{\rkc}{c}
\newcommand{\ones}{\mathds{1}}

\newcommand{\ii}{i}

\newcommand{\uhat}{\widehat{u}}

\iftrue
\usepackage{tikz}
\usetikzlibrary{shapes}
\usepackage{pgfplots}
\usetikzlibrary{calc}
\usetikzlibrary{positioning}
\usetikzlibrary{arrows}

\usepackage{tkz-euclide}

\pgfplotsset{compat=1.15}
\newcommand{\includeTikzOrEps}[1]{\tikzexternalenable \tikzsetnextfilename{#1}  {\include{figures/#1}} \tikzexternaldisable}

\usetikzlibrary{external}
\tikzexternalize[prefix=figures_pdf/]
\tikzexternaldisable

\else
\newcommand{\includeTikzOrEps}[1]{\includegraphics{figures_pdf/#1}}
\fi

\newcommand{\dtn}{\operatorname{DtN}}
\newcommand{\dti}{\operatorname{DtI}}
\newcommand{\id}{\operatorname{I}}

\renewcommand{\Re}{\operatorname{Re}}

\newcommand{\uinc}{u^{\mathrm{inc}}}
\newcommand{\duinc}{\dot{u}^{\mathrm{inc}}}

\newcommand{\dd}{\partial_t^k}
\newcommand{\ddinv}{[\partial_t^k]^{-1}}

\title{On superconvergence of Runge-Kutta convolution quadrature for the wave equation}
\author{Jens Markus Melenk, Alexander Rieder }

\date{\today}

\parindent 0in
\begin{document}
\maketitle
\begin{abstract}

The semidiscretization of a sound soft scattering problem modelled by the wave equation 
is analyzed. The spatial treatment is done by integral equation methods. 
Two temporal discretizations based on Runge-Kutta convolution quadrature 
are compared: one relies on the incoming wave as input data and the other one 
is based on its temporal derivative. The convergence rate of the latter is shown
to be higher than previously established in the literature. Numerical
results indicate sharpness of the analysis.   
The proof hinges on a novel estimate on the Dirichlet-to-Impedance map for certain Helmholtz problems.
  Namely, the frequency dependence can be lowered by one power of $\abs{s}$
  (up to a logarithmic term for polygonal domains) compared to the Dirichlet-to-Neumann map.
 
\end{abstract}

\section{Introduction}
Boundary element methods have established themselves as one of the  standard methods when dealing with scattering problems, especially
  if the domain of interest is unbounded. First introduced for stationary problems,
  beginning with  the seminal works~\cite{BamH,BamH2} these methods have steadily been extended to time-dependent problems;
  see~\cite{sayas_book} for an overview. The method of convolution quadrature (CQ), introduced by Lubich in~\cite{lubich88a,lubich88b},
  is a convenient way of extending the stationary results to a time-dependent setting.

It is well-known that the convergence rate of a Runge-Kutta convolution quadrature (RK-CQ) as introduced in~\cite{lubich_ostermann_rk_cq},
is determined by bounds on the convolution symbol $K$ in the Laplace domain.
Namely, a bound of the form
$$
\norm{K(s)} \leq C \abs{s}^{\mu}
$$
leads to convergence rate ${q+1-\mu}$, as was proven in~\cite{BLM11}, see also~\cite{banjai_lubich_err_analysis_rkcq,lubich_ostermann_rk_cq} for earlier results
in this direction.
Thus one might expect that changing the symbol to $s^{-1} K(s)$ would increase the convergence order by one.

When considering discretizations of the wave equation using boundary integral methods, this is not always the case.
Instead, it has been observed that sometimes a ``superconvergence phenomenon'' appears, where the
observed convergence rate surpasses those predicted, see~\cite{sayas_semigroups,sayas_composite_scattering,rieder_diss}.

In this paper, we give a first explanation why such a phenomenon occurs in the  model problem of sound soft scattering,
i.e., the discretization of the Dirichlet-to-Neumann map. We expect that similar phenomena can also explain the
improved convergence rate for the Neumann problem or more complex scattering problems.
The proof relies on the observation that the $s^{-1}$-weighted Dirichlet-to-Neumann map
can be decomposed into a Dirichlet-to-Impedance map plus the identity operator.
For the Dirichlet-to-Impedance operator, it was observed in~\cite{banjai_dtn} that an improved
bound holds compared to the Dirichlet-to-Neumann map as long as the geometry is given by the
sphere or the half-space. It is then conjectured that a similar bound holds for smooth, convex geometries.
In this paper, provided that we restrict the Laplace parameter $s$  
  to a sector, we generalize this result to a much broader class of geometries, namely, smooth or polygonal geometries, 
without convexity assumption. This will then immediately give the stated improved bound for the
convolution quadrature scattering problem. In the case of polygons, the result holds
   in a slightly weaker form in that it contains an additional logarithmic term.

As a consequence of this observation, it may often be beneficial to select a problem formulation with an extra time derivative.
In many situations, such formulations are even the natural choice, see, e.g.,~\cite{banjai_nonlinear1,banjai_lubich_nonlinear2,BLS15}, and especially
when one works with the wave equation as a first order system as in~\cite{sayas_composite_scattering}.

Another way of looking at this phenomenon is that when using a standard formulation~
(i.e., taking $\lambda^k$ as in Proposition~\ref{prop:standard_method}
  without using a time derivative on the data), 
   then the discrete integral will exhibit a superconvergence effect. 

We point out that the present paper focuses on a semidiscretization of the problem with respect to the time variable.
For practical purposes one would also have to take into account the discretization in space using boundary elements.
We also mention that while popular, CQ is only one possibility to apply boundary integral techniques
to wave propagation problems. Notably also space-time based methods have gained popularity~\cite{gimperlein_eps_17,gimperlein_eps_18,joly_rodriguez17} in recent years.

\section{Model Problem and Notation}
\label{sect:model_problem}
We consider a sound soft scattering problem for acoustic waves. 
For a bounded Lipschitz domain $\Omega^- \subseteq \R^d$ with $\Omega^+:=\R^d\setminus \overline{\Omega^-}$, the problem reads: 
Find $u^{\mathrm{tot}}$ such that 
\begin{align*}
  \ddot{u}^{\mathrm{tot}}&=\laplace u^{\mathrm{tot}} \quad \text{in $\Omega^+$,} \quad
 u^{\mathrm{tot}}(t)|_{\Gamma}=0
                           \;\, \text{for $t>0$},\quad u^{\mathrm{tot}}(t)=\uinc(t) \quad{\text{for } t \leq 0}.
\end{align*}
Here $\uinc$ is  a given incoming wave, i.e., $\uinc$ also solves the wave equation, and we assume that for $t\leq 0$ it has not reached the scatterer yet.
The problem can be recast by decomposing the total wave into the incoming and outgoing wave, $u^{\mathrm{tot}}=\uinc + u$, where $u$ solves:
\begin{equation}
\begin{split}
  \label{eq:scattering_problem}
  \ddot{u}&=\laplace u \quad \text{in $\Omega^+$,} \quad 
u(t)|_{\Gamma} =-\uinc(t)|_{\Gamma}
            \;\, \text{for $t>0$,}\quad u(t)=0 \;\,{\text{for } t \leq 0}.
\end{split}
\end{equation}
This will be the problem we are discretizing.

For simplicity, we consider two possible cases. The bounded Lipschitz domain $\Omega^- \subseteq \R^d$ has either a smooth boundary or  $\Omega^- \subseteq \R^2$ is  a polygon. 
While we expect
that the results and techniques can be generalized to the case of piecewise smooth geometries, such an extension would lead to a much higher level
of technicality in the present paper.
Although we focus on the exterior scattering problem as our motivating model problem all of the main results also hold for the
interior Dirichlet problem.

We end the section by fixing some notation. We write $H^{m}(\Omega^\pm)$ for the usual (complex valued) Sobolev spaces on  $\Omega^+$ or $\Omega^-$.
On the interface $\Gamma:=\partial \Omega$ we also need fractional spaces $H^{s}(\Gamma)$ for $s \in [-1,1]$, 
see, e.g.,~\cite{book_mclean,adams_fournier}
for precise definitions. We also set $H^{1}_{\laplace}(\Omega^\pm):=\{u \in H^1(\Omega^\pm): \; \laplace u \in L^2(\Omega^\pm)\}$.
We write $\gamma^\pm: H^1(\Omega^\pm) \to H^{1/2}(\Gamma)$ for the exterior and interior trace operator, and
$\partial_n^{\pm}: H^{1}_{\laplace}(\Omega^\pm) \to H^{-1/2}(\Gamma)$ for the normal derivative. We note that in
both cases, we take the normal to point out of the bounded domain $\Omega^-$.
We write $\tracejump{u}:=\gamma^+ u - \gamma^- u$ and $\tracemean{u}:=\frac{1}{2}\left(\gamma^+ u + \gamma^- u\right)$ for
the trace jump and mean, and $\normaljump{u}:=\partial_n^+ u - \partial_n^- u$ for the jump of the normal derivative.

The notation $A \lesssim B$ abbreviates $A \leq C B$ with implied constants independent of critical parameters, in particular
the parameter $s$ that appears throughout this work. For (relatively) open sets $\mathcal{O}$ we introduce the $L^2(\mathcal{O})$ scalar
product $(u,v)_{L^2(\mathcal{O})}:= \int_{\mathcal{O}} u \overline{v}$. 

\subsection{Boundary Integral Methods and Convolution Quadrature}
It is well-known that scattering problems of the form presented in Section~\ref{sect:model_problem} can be solved by employing boundary integral methods,
see~\cite{sayas_book} for a detailed time domain treatment. For the frequency domain,
results can be found in most textbooks on the subject, see~\cite{sauter_schwab,book_steinbach,book_mclean,book_eps,book_hsiao_wendland}.

The use of boundary integral methods for discretizing the time domain scattering problem dates back to the
works~\cite{BamH,BamH2}, where also  important Laplace domain estimates of the form~\eqref{eq:standard_dtn_bound} were first shown.


For $s \in \C_+:=\big\{z \in \C: \,\Re(z) > 0\big\}$, we introduce the single and double layer potentials
\begin{subequations}
    \label{bem:eq:def_potentials}
    \begin{align}    
      \left(\mathrm{SLP}(s)\varphi\right)(x)&:=\int_{\Gamma}{\Phi(x-y;s) \varphi(y) \;dS(y)},  \label{bem:eq:def_slp}\\
      \left(\mathrm{DLP}(s)\psi\right)(x)&:=\int_{\Gamma}{\partial_{n(y)}\Phi(x-y;s) \psi(y) \;dS(y) }\label{bem:eq:def_dlp},
    \end{align}
  \end{subequations}
  where $\Phi$ is the fundamental solution for the operator $-\laplace + s^2$:
  \begin{align}
  \Phi(x;s)&:=\begin{cases} 
    \frac{\ii}{4} H_0^{(1)}\left(\ii s \abs{x}\right) & \text{ for $d=2$, }\\
    \frac{e^{-s \abs{x}}}{4\pi\abs{x}}, & \text{ for $d=3$. }
  \end{cases}                                        
  \end{align}
  Here $H_0^{(1)}$ denotes the first kind Hankel function of order zero,  see~\cite[Chap.~{9}]{book_mclean}.
Finally, we introduce the boundary integral operators induced by the potentials:
\begin{align}
  \label{bem_operators}
  V(s)&:=\gamma^{\pm} \mathrm{SLP}(s) \qquad \text{and }\qquad  K(s):=\tracemean{\mathrm{DLP}(s)}.
\end{align}

In practice, these operators can be realized explicitly as integrals over the boundary $\Gamma$ since 
for sufficiently smooth functions $\psi$, $\varphi$ the following equations hold:
\begin{subequations}
  \label{bem:eq:def_bem_operators_as_integrals}
  \begin{align*}    
    V(s) \varphi&= \int_{\Gamma}{\Phi( \cdot,y,s ) \varphi(y) \, d\Gamma(y)} 
    \quad \text{ and }\quad 
    K(s) \psi=\int_{\Gamma}{\partial_{n(y)} \Phi( \cdot,y,s ) \psi(y) \, d\Gamma(y)}. 
  \end{align*}
\end{subequations}

The operator we consider for discretizing~\eqref{eq:scattering_problem} is the Dirichlet-to-Neumann map.
\begin{definition}
  For $s \in \C_+$, given $g \in H^{1/2}(\Gamma)$, let $u$ solve
  \begin{align*}
    -\laplace u + s^2 u &=0 \quad \text{in } \R^d \setminus \Gamma \qquad \text{ and } \quad \gamma^\pm u=g.
  \end{align*}
  We then define the operators
  \begin{align}
    \label{eq:def_dtn_dti}
    \dtn^\pm(s) g:=\partial^\pm_n u \quad \text{and} \quad
    \dti^\pm(s) g:=\partial^\pm_n u \pm s \gamma^\pm u=\dtn^\pm g \pm s g.
  \end{align}  
\end{definition}

In practice, the following well-known proposition gives an explicit way to calculate $\dtn$.
\begin{proposition}[see, e.g., {\cite[Appendix~2]{LS09}}]
  The Dirichlet-to-Neumann map can be written as  
  \begin{align}
  \dtn^\pm(s) &= V^{-1}(s) \big(\mp \frac{1}{2} + K(s) \big).  
  \end{align}  
\end{proposition}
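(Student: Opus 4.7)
The plan is to combine the Green representation formula for $u$ on each side of $\Gamma$ with the jump relations for the layer potentials and then to invert $V(s)$.

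First, for $s \in \C_+$ with $\Re(s)>0$, the equation $-\laplace u + s^2 u = 0$ on $\R^d \setminus \Gamma$ together with decay at infinity (and the regularity $u \in H^1_\laplace(\R^d\setminus\Gamma)$) lets us apply Green's second identity with the fundamental solution $\Phi(\cdot,y;s)$ in each subdomain. Keeping in mind that by our convention the normal $n$ is the outward normal of $\Omega^-$ on both sides (which reverses the sign of the boundary integral on $\Omega^+$), we obtain the representation
\begin{align*}
  u(x) = \pm \mathrm{DLP}(s) g(x) \mp \mathrm{SLP}(s)\bigl(\partial_n^\pm u\bigr)(x), \qquad x \in \Omega^\pm,
\end{align*}
where we have already used the Dirichlet datum $\gamma^\pm u = g$.

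Next, I would take the trace $\gamma^\pm$ of each formula and appeal to the classical jump relations: the single layer potential is continuous across $\Gamma$, so $\gamma^\pm \mathrm{SLP}(s) = V(s)$, while the double layer potential satisfies $\gamma^\pm \mathrm{DLP}(s) = \pm \tfrac{1}{2}\id + K(s)$ (these carry over verbatim from the Laplace kernel case since $\Phi(\cdot;s)-\Phi(\cdot;0)$ is smoother on the diagonal). Substituting and rearranging yields
\begin{align*}
  V(s)\bigl(\partial_n^\pm u\bigr) = \bigl(\mp \tfrac{1}{2}\id + K(s)\bigr) g.
\end{align*}

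Finally, the Bamberger--Ha-Duong coercivity estimate for $V(s)$ on $H^{-1/2}(\Gamma)$ for $s\in\C_+\setminus\{0\}$ provides invertibility, and solving for $\partial_n^\pm u = \dtn^\pm(s) g$ gives the claim. The only delicate point is the careful book-keeping of sign conventions for the normal direction, which is precisely what produces the $\mp \tfrac{1}{2}$ in front of the identity term and thereby distinguishes the interior from the exterior Dirichlet-to-Neumann map; once the signs in the representation formula are fixed, the rest of the argument is purely algebraic.
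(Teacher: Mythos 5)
Your argument is correct and is exactly the standard derivation that the paper's citation to [LS09, Appendix 2] refers to (the paper itself states the proposition without proof): per-subdomain Green representation, the jump relations $\gamma^\pm\mathrm{SLP}(s)=V(s)$ and $\gamma^\pm\mathrm{DLP}(s)=\pm\tfrac12\id+K(s)$ consistent with $K(s)=\tracemean{\mathrm{DLP}(s)}$ and the normal pointing out of $\Omega^-$, and invertibility of $V(s)$ for $\Re(s)>0$. The sign book-keeping in your representation formula and the resulting $\mp\tfrac12$ are consistent with the paper's conventions, so nothing further is needed.
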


RK-CQ was introduced by Lubich and Ostermann in~\cite{lubich_ostermann_rk_cq}.
It provides a simple and general way of approximating convolution integrals by a high order method
and has the great advantage that only the Laplace transform of the convolution symbol is required. 
We only very briefly introduce the method and
notation.

Let $K$ be a holomorphic function in the half plane $\Re(s)> \sigma_0 >0$. Let $\mathscr{L}$ denote the Laplace transform and $\mathscr{L}^{-1}$ its inverse.
We (formally) introduce the operational calculus by defining
  \begin{align*}
    K(\partial_t)g:= \mathscr{L}^{-1}\big( K(\cdot) \mathscr{L} g \big),
  \end{align*}
  where $g \in \operatorname{dom}\left(K(\partial_t)\right)$ is such that the 
  inverse Laplace transform exists and the expression above is well defined.

  For a Runge-Kutta method given by the Butcher tableau $\rkA$, $\rkb^T,$ $\rkc$, the convolution quadrature approximation of $K(\partial_t)$  with time step size $k>0$ is given, 
    for any function $g: \R \to \R$ with $g(t)=0$ for $t \leq 0$,  by the expression
\begin{align*}
  \big[K(\dd) g\big](t)& :=\rkb^T \rkA^{-1} \sum_{j=0}^{\infty}{ W_{j} \big[g\big(t - j\,k + k \rkc_\ell - k\big)\big]_{\ell=1}^{m}} 
\quad \text{ with }\\
  K\bigg(\frac{\Delta(\zeta)}{k}\bigg) &= \sum_{n=0}^{\infty}{W_n \zeta^n},
\end{align*}
where the matrix valued function $\Delta$ is given by
  $$
  \Delta(\zeta):=\Big(\rkA + \frac{\zeta}{1-\zeta}\ones \rkb^T\Big)^{-1}.
  $$  
  
  The extension to operator valued functions $K$ is straight forward. In practice,
  we only consider evaluating $K(\dd) g$ at the discrete time steps 
  $t_j:=j\,k$.

We make the following assumptions on the Runge-Kutta method, slightly stronger than~\cite{BLM11}.
\begin{assumption}
  \label{ass:RK_method} 
  \begin{enumerate}[(i)]
  \item The Runge-Kutta method is A-stable with (classical) order $p\geq 1$ and stage order $q\leq p$.
  \item The stability function $R(z):=1+z \rkb^T(\id - z \rkA)^{-1} \ones$ satisfies $\abs{R(\ii t)}<1$ for $0\neq t \in \R$.  
  \item The Runge-Kutta coefficient matrix $\rkA$ is invertible.
  \item The method is stiffly accurate, i.e., $\rkb^T \rkA^{-1}=(0,\dots,0,1)$.
  \end{enumerate}
\end{assumption}
\begin{remark}
  Assumption~\ref{ass:RK_method} is satisfied by the Radau IIA and Lobatto IIIC methods, see~\cite{hairer_wanner_2}.
  Also note that the order conditions imply that $c_{m}=1$ for such methods.
\eremk
\end{remark}

Our analysis will employ the following result on RK-CQ 
using Laplace domain estimates:
\begin{proposition}[{\cite[Thm.~{3}]{BLM11}}]
  \label{prop:blm_cq_bound}
  Assume that $K$ is holomorphic in the half plane $\Re(s) > \sigma_0 >0$
  and that there exist $\mu_1,\mu_2 \in \R$ such that
  $K(s)$ satisfies the following bounds for  all $\delta>0$:
  \begin{align*}
    \abs{K(s)}&\leq C_{\sigma_0} \abs{s}^{\mu_1} \quad \text{ for } \Re(s) > \sigma_0 > 0, \\
    \abs{K(s)}&\leq C_{\sigma,\delta} \abs{s}^{\mu_2} \quad \text{ for } \Re(s) > \sigma > 0 \text{ with } \operatorname{Arg}(s) \in (-\pi/2+\delta,\pi/2-\delta).
  \end{align*}

  Assume that the Runge-Kutta method satisfies Assumption~\ref{ass:RK_method}. Let $r > \max\big(p+\mu_1,p,q+1\big)$ and
  $g \in C^{r}([0,T])$ satisfy $g(0)=\dot{g}(0)=\dots g^{(r-1)}(0)=0$. Then there exists $\bar{k} >0$  such that
  for $0<k<\bar{k}$,
  \begin{align*}
    \abs{K(\dd) g(t_n) - K(\partial_t) g(t_n)}&\leq C k^{\min(p,q+1-\mu_2)}\left(\abs{g^{(r)}(0)} + \int_{0}^{t_n}{\abs{g^{(r+1)}(\tau)}\,d\tau}\right). 
  \end{align*}
  The constant C depends on $t_n$, $\sigma_0$, $\bar{k}$, the constants $C_{\sigma_0}$, $C_{\delta}$, and the Runge-Kutta method. 
\end{proposition}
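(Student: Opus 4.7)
The plan is to realize the error as a contour integral in the Laplace variable and then to estimate it by splitting the contour into a part close to the imaginary axis, where only the weaker bound $\abs{K(s)}\le C_{\sigma_0}\abs{s}^{\mu_1}$ is available, and a part inside an open sector where the sharper bound $\abs{K(s)}\le C_{\sigma,\delta}\abs{s}^{\mu_2}$ applies. The improvement from $\mu_1$ to $\mu_2$ is exactly what upgrades the classical rate $p+1-\mu_1$ to the sharper $q+1-\mu_2$ when $q+1-\mu_2<p$.

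First, I would use that for $g$ with $r$ vanishing derivatives at $0$ the Laplace transform satisfies $\widehat{g^{(r)}}(s)=s^r\widehat g(s)$, and write the exact evaluation
$$K(\partial_t)g(t_n)=\frac{1}{2\pi\ii}\int_{\sigma+\ii\R}e^{s t_n}K(s)\widehat g(s)\,ds,$$
together with a corresponding representation for the CQ evaluation coming from the generating function $\sum_{n\ge 0}W_n\zeta^n=K(\Delta(\zeta)/k)$, expressed via a Cauchy integral over a circle $\abs{\zeta}=\varrho<1$. After summing the geometric series, the discrete value acquires the form of a contour integral involving $K(\Delta(\zeta)/k)\widehat g(\cdot)$ and the Runge-Kutta transfer factor $(\id-\zeta R(\Delta(\zeta)))^{-1}$. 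Subtracting the two expressions collapses the error onto an integral whose kernel measures both the discrepancy between the exact propagator $e^{sk}$ and the stability function $R(sk)$, and the discrepancy between $K(\Delta(\zeta)/k)$ and $K(s)$ at matched frequencies.

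Next, I would exploit the order structure of the Runge-Kutta method on two scales. For low frequencies $\abs{sk}\lesssim 1$ the classical order condition $R(z)=e^z+\bigO(z^{p+1})$, combined with stage order $q$, yields a consistency error of size $k^p$; here the weaker bound $\abs{K(s)}\le C\abs{s}^{\mu_1}$ is used after integrating by parts $r>p+\mu_1$ times against $\widehat g$. For high frequencies $\abs{sk}\gtrsim 1$ the stage-order estimate controls the consistency error only by $k^{q+1}$, but the worst contributions come from frequencies of order $1/k$, and this is where the sector bound $\abs{K(s)}\le C_{\delta}\abs{s}^{\mu_2}$ enters to produce the loss $k^{-\mu_2}$ and the resulting rate $k^{q+1-\mu_2}$. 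A-stability together with the strict inequality $\abs{R(\ii t)}<1$ for $t\neq 0$ is what permits the deformation of the $\zeta$-contour slightly inside the stability region, so that the operator $\id-\zeta R(\Delta(\zeta))$ stays boundedly invertible uniformly in $k$ on the modified contour.

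The main obstacle, and the technical heart of the argument, is matching these two regimes at the transition $\abs{sk}\sim 1$ while keeping track of the operator-valued Runge-Kutta transfer factors. Stiff accuracy, $\rkb^T\rkA^{-1}=(0,\dots,0,1)$, is essential here: it eliminates spurious boundary contributions from the internal stages and forces the effective propagator at the final time to coincide with $R(\Delta(\zeta))$, without which the sector-based improvement from $\mu_2$ cannot be transported into a global rate. Taking the minimum of the two regime bounds, and absorbing the $r$-fold integration by parts into the norm $\abs{g^{(r)}(0)}+\int_0^{t_n}\abs{g^{(r+1)}(\tau)}\,d\tau$ via a Sobolev embedding/fundamental-theorem argument on the right-hand side, then yields the claimed convergence of order $\min(p,q+1-\mu_2)$ with a constant of the stated dependence.
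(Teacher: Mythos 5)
The paper offers no proof of Proposition~\ref{prop:blm_cq_bound}: it is quoted directly from \cite[Theorem 3]{BLM11}, so your attempt has to be judged as a standalone reconstruction of that external result. Your outline does follow the general strategy of the cited proof --- represent both $K(\partial_t)g$ and $K(\dd)g$ through contour/Cauchy integrals, use the classical order for $\abs{sk}\lesssim 1$ and the stage order together with the sectorial bound $\abs{K(s)}\lesssim \abs{s}^{\mu_2}$ for frequencies of size $1/k$, then combine to get $k^{\min(p,q+1-\mu_2)}$ --- but as written it is a roadmap rather than a proof. The two genuinely technical steps are only named, not carried out: (i) the precise error representation for Runge--Kutta CQ, including uniform (in $k$) bounds for the Runge--Kutta transfer factors on the deformed $\zeta$-contour; and (ii) the justification that the arguments at which $K$ is actually evaluated in the discrete scheme, i.e.\ the spectrum/numerical range of $\Delta(\zeta)/k$, remain in a fixed sector $\operatorname{Arg}(s)\in(-\pi/2+\delta,\pi/2-\delta)$ with real part bounded away from zero, uniformly as $k\to 0$. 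Point (ii) is exactly where A-stability and $\abs{R(\ii t)}<1$ enter, and without it the improved $\mu_2$-bound cannot be applied to the discrete operator at all; your text invokes these hypotheses only to keep $\id-\zeta R$ invertible, which is not the same thing. Likewise, the matching of the two regimes --- the step at which the exponent $\min(p,q+1-\mu_2)$ actually emerges --- is described qualitatively but never performed.

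One concrete inaccuracy: stiff accuracy is not needed for this proposition. The paper states explicitly that Assumption~\ref{ass:RK_method} is slightly stronger than the hypotheses of \cite{BLM11}; the condition $\rkb^T\rkA^{-1}=(0,\dots,0,1)$ (with $c_m=1$) is exploited only later, in the proof of Theorem~\ref{thm:convergence_cq}, to show that the convolution quadrature reproduces the identity operator exactly. So your claim that stiff accuracy is essential for transporting the sector bound into the global rate misattributes the role of that hypothesis, and a proof of Proposition~\ref{prop:blm_cq_bound} that genuinely relied on it would be weaker than the theorem being cited.
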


\section{Main results}
To simplify the notation, we introduce a symbol for the sectors in Proposition~\ref{prop:blm_cq_bound}.
Throughout this work we fix $\sigma_0>0$ and $\delta >0$ and set
  $$
  \mathscr{S}:=\big\{ s \in \C\colon \Re(s) > \sigma_0, \operatorname{Arg}(s) \in (-\pi/2+\delta, \pi/2-\delta) \big\}.
  $$

\begin{remark}
  The choice of $\sigma_0>0$ and $\delta >0$ in the definition of $\mathscr{S}$ is arbitrary, and all our estimates will hold for any
  choice, although all the constants will depend on $\sigma_0$ and $\delta$.
\eremk
\end{remark}

We are now able to state the main result of the paper. We start by stating the standard convergence result for discretizing
the Dirichlet-to-Neumann map.
\begin{proposition}[Standard method]
  \label{prop:standard_method}
  Let $g \in C^{r}([0,T],H^{1/2}(\Gamma))$ for some $r > p+2$ and $g(0)=\dot{g}(0)=\dots = g^{(r)}(0)=0$. 
  Let $\lambda:=\dtn^\pm(\partial_t) g$ be the exact normal derivative and
  $\lambda^k:=\dtn^\pm(\dd) g$ denote the standard CQ-approximation.
  Then there exist constants $C(T)>0$ and $\overline{k}>0$ such that
  the following estimate holds for $0\leq t \leq T$ and $0<k<\overline{k}$:
  \begin{align}
    \label{eq:convergence_cq_std}
    \norm{\lambda(t) - \lambda^k(t)}_{H^{-1/2}(\Gamma)}
    &\leq C(T) k^{q}\sum_{j=0}^{r}{\sup_{\tau\in(0,T)}{\norm{g(\tau)}_{H^{1/2}(\Gamma)}}}
  \end{align}
  with a constant $C(T)$ depending on the terminal time $T$, the Runge-Kutta method, $\Gamma$,
  and $\overline{k}$.
\end{proposition}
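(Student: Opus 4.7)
The plan is to deduce the estimate by applying Proposition~\ref{prop:blm_cq_bound} directly to the operator-valued convolution symbol $K(s) = \dtn^\pm(s)$, once the appropriate Laplace-domain bounds are available. The only real input needed beyond the abstract Runge-Kutta CQ machinery is the classical bound of Bamberger and Ha Duong on the frequency-domain Dirichlet-to-Neumann map.

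First, I would recall (or reprove via an energy argument on the transmission problem for $-\laplace + s^2$, or via the representation $\dtn^\pm(s) = V^{-1}(s)\big(\mp \tfrac12 + K(s)\big)$) the standard half-plane estimate
\[
\norm{\dtn^\pm(s)}_{H^{1/2}(\Gamma) \to H^{-1/2}(\Gamma)} \lesssim \frac{\abs{s}^{2}}{\Re(s)}, \qquad \Re(s) \geq \sigma_0 > 0.
\]
Since $\Re(s) \geq \sigma_0$ on the full half-plane, this gives the polynomial bound with exponent $\mu_1 = 2$. Restricting $s$ to the sector $\mathscr{S}$, the condition $\operatorname{Arg}(s) \in (-\pi/2+\delta,\pi/2-\delta)$ implies $\Re(s) \geq \sin(\delta)\,\abs{s}$, which upgrades the estimate to
\[
\norm{\dtn^\pm(s)}_{H^{1/2}(\Gamma) \to H^{-1/2}(\Gamma)} \lesssim \abs{s}, \qquad s \in \mathscr{S},
\]
so $\mu_2 = 1$.

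With these two exponents in hand, the operator-valued version of Proposition~\ref{prop:blm_cq_bound} is applicable: the predicted order is $\min(p,\,q+1-\mu_2) = \min(p,q) = q$, using $q \leq p$ from Assumption~\ref{ass:RK_method}(i). The right-hand side of~\eqref{eq:convergence_cq_std} is then obtained by majorizing the term $\abs{g^{(r)}(0)} + \int_0^{t_n} \abs{g^{(r+1)}(\tau)}\,d\tau$ appearing in Proposition~\ref{prop:blm_cq_bound} by the (finite) supremum in time of $H^{1/2}(\Gamma)$-norms of the first $r$ time derivatives of $g$, which are controlled by the assumed $C^r$-regularity together with the vanishing initial conditions $g(0) = \dot g(0) = \dots = g^{(r)}(0) = 0$.

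No serious obstacle is expected: this proposition is essentially the standard convergence result for Runge-Kutta CQ applied to the DtN map, and it is included here only to fix the baseline rate $k^q$ against which the later, sharper rate for the modified symbol $s^{-1}\dtn^\pm(s)$ is to be compared. The only point to be slightly careful about is the transition from the scalar formulation of Proposition~\ref{prop:blm_cq_bound} to its operator-valued analogue, which is routine since the bounds above hold uniformly in the spatial norm.
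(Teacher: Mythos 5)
Your proposal is correct and takes essentially the same route as the paper: the paper's proof simply invokes the well-known bound \eqref{eq:standard_dtn_bound}, $\norm{\dtn^\pm(s)}_{H^{1/2}(\Gamma)\to H^{-1/2}(\Gamma)}\lesssim \abs{s}^2/\Re(s)$, together with Proposition~\ref{prop:blm_cq_bound}. Your additional details --- extracting $\mu_1=2$ on the half-plane, $\mu_2=1$ on the sector via $\Re(s)\gtrsim\abs{s}$, and noting $\min(p,q+1-\mu_2)=q$ --- merely spell out what the paper leaves implicit, so there is nothing to correct.
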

\begin{proof}
  Follows from the well-known bound 
  \begin{align}
    \norm{\dtn^\pm(s)}_{H^{1/2}(\Gamma) \to H^{-1/2}(\Gamma)}&\lesssim \frac{\abs{s}^2}{\Re(s)}
     \label{eq:standard_dtn_bound}
  \end{align}
  (see for example\cite{LS09}) and Proposition~\ref{prop:blm_cq_bound}.
\end{proof}

We will observe numerically in Section~\ref{sect:numerics} that Proposition~\ref{prop:standard_method} 
is essentially sharp. Thus, when considering the differentiated equation, one expects an increase in order by one, 
which follows directly from Proposition~\ref{prop:blm_cq_bound}.
But for the Dirichlet-to-Neumann map the increase of order is even greater, as long as one assumes slightly higher regularity
of the data.

\begin{theorem}[Method based on differentiated data]
  \label{thm:convergence_cq}
  Let $r > p+2$. Let  $g \in C^{r}\big([0,T],H^1(\Gamma)\big)$ 
satisfy $g(0)=\dot{g}(0)=\dots = g^{(r)}(0)=0$. 
  Let $\lambda:=\dtn^\pm(\partial_t) g$ be the exact normal derivative and
  $\lambda^k:=[\ddinv \dtn^\pm(\dd)] \dot{g}$ denote the CQ-approximation 
  using $\dot{g}$ as input data.
  Then, for all $\varepsilon > 0$,  there exist constants $C(T,\varepsilon)>0$ and $\overline{k}>0$ such that
  the following estimate holds for $0\leq t \leq T$ and $0<k<\overline{k}$:
  \begin{align}
    \label{eq:convergence_cq}
    \norm{\lambda(t) - \lambda^k(t)}_{H^{-1/2}(\Gamma)}
    &\leq C(T,\varepsilon) k^{\min(q+2-\varepsilon,p)}\sum_{j=0}^{r}{\sup_{\tau\in(0,T)}{\norm{g(\tau)}_{H^{1}(\Gamma)}}}.
  \end{align}
  The constant $C(T,\varepsilon)$ depends on $\varepsilon$, the terminal time $T$, the Runge-Kutta method, $\Gamma$,
    and $\overline{k}$. If $\Gamma$ is smooth, one can take $\varepsilon=0$.
\end{theorem}
\begin{proof}
  We apply Proposition~\ref{prop:blm_cq_bound}. By linearity, we can write the Dirichlet-to-Neummann operator as
  \begin{align*} 
  s^{-1} \dtn(s) &= s^{-1} \dti(s) + \id \quad \text{or, in the time domain,} \\
  \partial_t^{-1}\dtn(\partial_t)& =\partial_t^{-1} \dti(\partial_t) + \id(\partial_t).
  \end{align*}
  The second operator (in frequency domain) is independent of $s$.
  It is a simple calculation that in such cases,
  i.e., if $K(s)=T$ for all $s$, 
  the convolution weights satisfy $W_j=\delta_{j,0} T$.
  Thus, we have 
  \begin{align*}
    \id(\dd) g(t_{n+1})=\rkb^T \rkA^{-1} \left(g(t_n + k \rkc_\ell)\right)_{\ell=1}^{m}.
  \end{align*}
  Since stiff accuracy implies $\rkb^T \rkA^{-1}=(0,\dots,0,1)$ and $c_{m}=1$,
  the operator $\id$  is reproduced exactly by the CQ. 
  A  similar decomposition was already invoked in~\cite{BLM11} to explain a superconvergence phenomenon for a scalar problem.
  Combining standard estimates, e.g., \cite[Table~1]{LS09}, with
  Theorem~\ref{thm:dtn_bound} shows that the Dirichlet-to-Impedance map satisfies
  \begin{align*}
    \norm{\dti(s)}_{H^{1}(\Gamma)\to H^{-1/2}(\Gamma)}
    &\lesssim \abs{s}^{2} &\text{for $s \in \C_+,$}  \qquad\qquad\\
    \norm{\dti(s)}_{H^{1}(\Gamma)\to H^{-1/2}(\Gamma)}&\lesssim \sqrt{\log(\abs{s}+2)} \;
                                                &\text{for $s \in \mathscr{S}$. \qquad\qquad}
  \end{align*}
  By Proposition~\ref{prop:blm_cq_bound} and 
  by estimating the logarithmic term by 
   $C |s|^\varepsilon$ for arbitrary $\varepsilon>0$, we obtain \eqref{eq:convergence_cq}.
\end{proof}

While Theorem~\ref{thm:convergence_cq} is the main motivation for this paper, 
its proof is based on another result, which may be of independent interest.

\begin{theorem}
  \label{thm:dtn_bound}
  Let $s \in \mathscr{S}$. Let $\Omega \subseteq \R^d$ 
be a bounded smooth Lipschitz domain. The following estimate holds for the Dirichlet-to-Neumann map:
  \begin{align}
    \label{eq:dtn_bound}
    \norm{\dtn^\pm(s) g \pm s g}_{H^{-1/2}(\Gamma)}
    &\leq C \norm{g}_{H^1(\Gamma)} \qquad \qquad \qquad \forall g \in H^1(\Gamma).
  \end{align}
  If $\Omega \subseteq \R^2$ is a bounded Lipschitz polygon, then one has 
  \begin{align}
    \label{eq:dtn_bound_polygon}
    \norm{\dtn^\pm(s) g \pm s g}_{H^{-1/2}(\Gamma)}
    &\leq C \sqrt{ {\log(\abs{s}+2)}} \norm{g}_{H^1(\Gamma)} \quad \forall g \in H^1(\Gamma).
  \end{align}
  The constant $C$ depends only on $\Omega$ and the parameters $\sigma_0$, $\delta$ defining the sector $\mathscr{S}$.
\end{theorem}
\begin{proof}
  Due to its lengthy and technical nature, we defer the proof to Section~\ref{sect:proofs}.
  For smooth geometries it is shown as Corollary~\ref{cor:decomposition_smooth}.
    Polygonal domains are handled in Corollary \ref{cor:decomposition_for_polygon}.
\end{proof}

  \begin{remark}
The regularity requirement $g \in H^{1}(\Gamma)$ is stronger than the expected requirement
$g \in H^{1/2}(\Gamma)$. This is due to the construction of the boundary layer function 
    (see~\eqref{eq:definition_of_bl_function}).
\eremk
  \end{remark}

Since all our results hold for both the interior and exterior problem, we can also easily treat the
case of an indirect BEM formulation.
\begin{corollary}[Indirect formulation]
  Let $s\in \mathscr{S}$ and assume that $\Omega\subseteq \R^d$ is smooth.
  Then, the operator $V^{-1}(s)-2s$ satisfies the bound
  \begin{align}
    \norm{V^{-1}(s) \psi - 2s \psi}_{H^{-1/2}(\Gamma)}&\lesssim \norm{\psi}_{H^{1}(\Gamma)} \qquad \forall \psi \in H^1(\Gamma).
  \end{align}
  
  Let $g \in C^{r}([0,T],H^1(\Gamma)$ for some $r > p+2$ with $g(0)=\dot{g}(0)=\dots g^{(r)}(0)=0$. 
  Let $\varphi:=V^{-1}(\partial_t) g$ be the exact density and
  $\varphi^k:=[\ddinv V^{-1}(\dd)] \dot{g}$ be its CQ-approximation.

  Then there exist constants $C(T)>0$ and $\overline{k}>0$ such that
    the following estimate holds for $0\leq t \leq T$ and $0<k<\overline{k}$:
  \begin{align}
    \label{eq:convergence_cq_indirect}
    \norm{\varphi(t) - \varphi^k(t)}_{H^{-1/2}(\Gamma)}
    &\leq C(T) k^{\min(q+2,p)}\sum_{j=0}^{m}{\sup_{\tau\in(0,T)}{\norm{g(\tau)}_{H^{1}(\Gamma)}}}.
  \end{align}
  The constant $C(T)$ depends on $T$, 
   the Runge-Kutta method, $\Gamma$, and $\overline{k}$.
  
  If $\Omega\subseteq \R^2$ is a polygon, then 
  \begin{align}
    \!\! \norm{V^{-1}(s) \psi - 2s \psi}_{H^{-1/2}(\Gamma)}
    &\!\leq\! C(T)
      \sqrt{{\log(\abs{s}+2)}}\norm{\psi}_{H^{1}(\Gamma)} \ \forall \psi \in H^1(\Gamma) 
\end{align}
and 
\begin{align}
    \!\! \norm{\varphi(t) - \varphi^k(t)}_{H^{-1/2}(\Gamma)}
    &\leq C(T,\varepsilon)
      k^{\min(q+2-\varepsilon,p)}\sum_{j=0}^{m}{\sup_{\tau\in(0,T)}{\norm{g(\tau)}_{H^{1}(\Gamma)}}},
  \end{align}
  where $\varepsilon>0$ is arbitrary, and $C(T,\varepsilon)$ depends additionally on $\varepsilon$.
    
\end{corollary}
\begin{proof}
  We can write $V^{-1}(s)=\dtn^-(s) - \dtn^+(s)$. Thus the statements follows from Theorems~\ref{thm:convergence_cq} and \ref{thm:dtn_bound}.
\end{proof}

\section{Proofs}
\label{sect:proofs}
The proof of Theorem~\ref{thm:dtn_bound} hinges on three main observations, which require some technical work to be made rigorous:
\begin{enumerate}
\item
  \label{it:rem_on_proof_1}
  In 1d on $\R_+$, the interior Dirichlet-to-Neumann map is given by $g \mapsto s g$.
\item The existing $\dtn$-estimate's poor $s$ dependence   is mainly caused by  boundary layers.
\item Boundary layers are essentially a 1d phenomenon, so observation \ref{it:rem_on_proof_1} applies.
\end{enumerate}
\subsection{Preliminaries}
There are many ways of defining fractional order Sobolev spaces. A convenient way of working with them
  is by introducing them via the real method of interpolation. Given Banach spaces $\mathcal{X}_1 \subseteq \mathcal{X}_0$
  with continuous embedding
and parameters $\theta \in (0,1)$, $q\in [1,\infty)$, we define the interpolation norm and space as follows: 
\begin{subequations}
\label{eq:besov-norms}
\begin{align}
  \norm{u}_{[\mathcal{X}_0,\mathcal{X}_1]_{\theta,q}}
  &:=\left(\int_{t=0}^{\infty}{\Big( t^{-\theta}  \inf_{v \in \mathcal{X}_1} \big(
    \norm{u-v}_{\mathcal{X}_0} + t \norm{v}_{\mathcal{X}_1}\big)\Big)^q\frac{dt}{t}} \right)^{1/q} \\
  \big[\mathcal{X}_0,\mathcal{X}_1\big]_{\theta,q}&:=\Big\{ u\in \mathcal{X}_0:  \norm{u}_{[\mathcal{X}_0,\mathcal{X}_1]_{\theta,q}}<\infty \Big\}.
\end{align}
\end{subequations}

When working with the Helmholtz equation, it is convenient to work with $\abs{s}$-weighted norms:
\begin{definition}
  \label{def:weighted_norms}
For an open (or relatively open) set $\mathcal{O}$, parameters $s \in \C_+$ and
  $\theta \in \{0,1\}$, we define the weighted Sobolev norms
  \begin{align}
    \label{eq:weighted_norms}
    \norm{ u}_{\abs{s},\theta,\mathcal{O}}^2&:=\abs{u}_{H^{\theta}(\mathcal{O})}^2 + \abs{s}^{2\theta} \norm{u}^2_{L^2(\mathcal{O})}.
  \end{align}
  For $\theta \in (0,1)$, the corresponding norms are defined via interpolation. 
  If we want to include  boundary conditions, we write
  $\norm{ \cdot }_{\abs{s},\theta,\sim,\mathcal{O}}$ for the
  interpolation space between $L^2(\mathcal{O})$ and $H_0^1(\mathcal{O})$, but equipped with the weighted
  norms~\eqref{eq:weighted_norms}.
  
  The dual norms are defined by
  \begin{align*}
    \norm{ u}_{\abs{s},-\theta,\mathcal{O}}&:=\sup_{v \in \widetilde{H}^{\theta}(\mathcal{O})} \frac{\big|(u,v)_{L^2(\mathcal{O})}\big|}{\norm{v}_{\abs{s},\theta,\sim,\mathcal{O}}}.
  \end{align*}
\end{definition}
For the  most part we will be working with the closed surface $\partial \Omega$. There, the norms
  $\norm{\cdot}_{\abs{s},\theta,\partial \Omega}$ and $\norm{\cdot}_{\abs{s},\theta,\sim,\partial \Omega}$ coincide.
By Lemma~\ref{lemma:norm-equivalence} we also have for $\theta \in (0,1)$ and bounded domains $\mathcal{O}$ the norm equivalence 
\begin{equation}
\label{eq:norm-equivalence}
    \norm{ u}_{\abs{s},\theta,\mathcal{O}}^2 \sim \abs{u}_{H^{\theta}(\mathcal{O})}^2 + \abs{s}^{2\theta} \norm{u}^2_{L^2(\mathcal{O})}.
\end{equation}
with implied constants \emph{independent} of $|s|$. 

We start with some well-known $s$-explicit estimates for the (modified) Helmholtz equation.
\begin{lemma}[Well posedness]
  \label{lemma:well_posedness}
  Let $s \in \mathscr{S}$. The sesquilinear form 
  \begin{align*}
    a_{s}(u,v)
    &:=(\nabla u,\nabla v)_{L^2(\Omega^\pm)} + s^2 (u,v)_{L^2(\Omega^\pm)}    
  \end{align*}
  associated to $-\laplace + s^2$ is elliptic in the sense that, 
   using  $\zeta:=\frac{\overline{s}}{\abs{s}}$,  it satisfies
  \begin{align*}
    \Re\big(\zeta a_{s}(u,u)\big) \geq C \norm{u}_{\abs{s},1,\Omega^\pm}^2.
  \end{align*}  
\end{lemma}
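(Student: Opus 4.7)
The plan is a standard rotation-of-variables argument: the complex scalar $\zeta$ will be chosen as $\zeta := \bar s / \abs{s}$, which has modulus one, and the assumption that $s$ lies in the sector $\mathscr{S}$ is exactly what makes such a $\zeta$ effective.

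First I would simply plug $v=u$ into the bilinear form and multiply by $\zeta$, writing
\begin{align*}
\zeta\, a_{s}(u,u) &= \tfrac{\bar s}{\abs{s}} \norm{\nabla u}_{L^2(\Omega^\pm)}^2 + \tfrac{\bar s}{\abs{s}} s^{2} \norm{u}_{L^2(\Omega^\pm)}^2
= \tfrac{\bar s}{\abs{s}} \norm{\nabla u}_{L^2(\Omega^\pm)}^2 + \abs{s}\, s\, \norm{u}_{L^2(\Omega^\pm)}^2,
\end{align*}
where I used $\bar s \microspace s = \abs{s}^2$ to simplify the second coefficient. Taking real parts and observing that both $\bar s/\abs{s}$ and $\abs{s} s$ have real part equal to $\Re(s)$ up to the factor $1/\abs{s}$ and $\abs{s}$ respectively, I arrive at
\begin{align*}
\Re\bigl(\zeta\, a_{s}(u,u)\bigr) = \tfrac{\Re(s)}{\abs{s}} \norm{\nabla u}_{L^2(\Omega^\pm)}^2 + \abs{s}\, \Re(s)\, \norm{u}_{L^2(\Omega^\pm)}^2.
\end{align*}

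The final step is to estimate $\Re(s)/\abs{s}$ from below using the sector constraint. Since $s \in \mathscr{S}$, one has $\operatorname{Arg}(s) \in (-\pi/2+\delta,\pi/2-\delta)$, so $\Re(s)/\abs{s} = \cos(\operatorname{Arg}(s)) \geq \sin(\delta) > 0$. Combining this with $\Re(s) \geq \sin(\delta)\abs{s}$, the two right-hand side contributions become bounded below by $\sin(\delta)\,\norm{\nabla u}_{L^2(\Omega^\pm)}^2$ and $\sin(\delta)\,\abs{s}^2\,\norm{u}_{L^2(\Omega^\pm)}^2$ respectively, so together they reproduce $\sin(\delta)\, \norm{u}_{\abs{s},1,\Omega^\pm}^{2}$, which is the desired bound with $C=\sin(\delta)$.

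There is really no hard step here; the only point worth flagging is that $\zeta$ is allowed to depend on $s$ (the statement only requires $\abs{\zeta}=1$), so the $s$-dependent rotation $\bar s/\abs{s}$ is legitimate. If instead $\zeta$ were required to be a fixed constant independent of $s$ the argument would fail on the whole sector, which is why the current formulation is phrased as it is.
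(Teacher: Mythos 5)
Your proposal is correct and follows essentially the same route as the paper: the choice $\zeta=\bar{s}/\abs{s}$, taking real parts, and using the sector condition $\Re(s)\gtrsim\abs{s}$ (which you make explicit as $\cos(\operatorname{Arg}(s))\geq\sin(\delta)$). The only difference is that you spell out the constant $C=\sin(\delta)$, which the paper leaves implicit.
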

\begin{proof}
  We  calculate:
  \begin{align*}
    \Re\big(\zeta a_{s}(u,u)\big) 
    &= \frac{\Re(\overline{s})}{\abs{s}}(\nabla u,\nabla u)_{L^2(\Omega^\pm)} + \frac{\Re(s)}{\abs{s}} \abs{s}^2 (u,u)_{L^2(\Omega^\pm)}.
  \end{align*}
  Since $\Re(s) \sim \abs{s}$ in the sector $\mathscr{S}$ this concludes the proof.
\end{proof}

\begin{lemma}[Trace estimates]
  \label{lemma:trace_estimates}
  For $\Re(s)\geq 0$ and $\abs{s}>\sigma_0$, let $u \in H^1(\Omega^\pm)$ satisfy
  $$-\laplace u + s^2 u =f  \in L^2(\Omega^\pm).$$
  Then the following estimates hold for the traces of $u$:  
  \begin{align}
    \norm{\partial_n^\pm u}_{H^{-1/2}(\Gamma)} 
    &\lesssim \abs{s}^{1/2}\norm{u}_{\abs{s},1,\Omega^\pm} + \abs{s}^{-1/2}\norm{f}_{L^2(\Omega^\pm)},
    \label{eq:normal_trace_estimate}\\
    \norm{\gamma^\pm u}_{H^{-1/2}(\Gamma)} &\lesssim  \abs{s}^{-1/2}\norm{u}_{\abs{s},1,\Omega^\pm}, \\
    \norm{\partial_n^\pm u \pm s \gamma^\pm u}_{H^{-1/2}(\Gamma)} &\lesssim \abs{s}^{1/2}\norm{u}_{\abs{s},1,\Omega^\pm} + \abs{s}^{-1/2}\norm{f}_{L^2(\Omega^\pm)}.
  \end{align}
\end{lemma}
\begin{proof}
  We start with the normal derivative. 
For
  any $\xi \in H^{1/2}(\Gamma)$ and any $v$ with $\gamma^\pm v=\xi$ we calculate:
  \begin{align*}
    \left| \dualproduct{\partial_n^\pm u}{\xi}\right| 
    &=\left| \ltwoproduct{\nabla u}{\nabla v} + s^2 \ltwoproduct{u}{v} - \ltwoproduct{f}{v} \right| \\
    &\lesssim\! \big(\norm{u}_{\abs{s},1,\Omega^\pm} \!+ \abs{s}^{-1} \norm{f}_{L^2(\Omega^\pm)}\!\big) \norm{v}_{\abs{s},1,\Omega^\pm} , 
  \end{align*}
  where in the last step we select $v$
  as the minimal energy function, satisfying
    \begin{align}
\label{eq:energy-minizing-extension} 
      -\laplace v + |s|^2 v =0 \;\text{in $\Omega^\pm$}, \qquad \gamma v = \xi \;\text{ on $\Gamma$}.
    \end{align}
    By \cite[Prop.~{2.5.1}]{sayas_book}, $v$
    admits the estimate $\norm{v}_{\abs{s},1,\Omega^\pm}\lesssim \abs{s}^{1/2} \|\xi\|_{H^{1/2}(\Gamma)},$ and
    \eqref{eq:normal_trace_estimate} follows.

 For the Dirichlet trace, we get using the multiplicative trace estimate
 and the same lifting $v$:
 \begin{align*}
   \left| \dualproduct{\gamma^\pm u}{\xi}\right| 
   &\!\leq\! \norm{\gamma^\pm u}_{L^2(\Gamma)} \! \norm{\xi}_{L^2(\Gamma)}
     \!\lesssim\! \norm{u}_{L^2(\Omega^\pm)}^{1/2} \norm{u}_{H^1(\Omega^\pm)}^{1/2} \norm{v}_{L^2(\Omega^\pm)}^{1/2} \norm{v}_{H^1(\Omega^\pm)}^{1/2} \\
   &\leq\abs{s}^{-1} \norm{u}_{\abs{s},1,\Omega^\pm} \norm{v}_{\abs{s},1,\Omega^\pm}
     \lesssim \abs{s}^{-1/2} \norm{u}_{\abs{s},1,\Omega^\pm} \norm{\xi}_{H^{1/2}(\Gamma)}.
 \end{align*}
 The estimate for the impedance trace then follows trivially.
\end{proof}

The previous lemma shows that for \textsl{a priori} estimates in terms of standard Sobolev norms  
the constants involved  have some $s$ dependence. The next lemmas show that 
the use of the weighted norms introduced in 
Definition~\ref{def:weighted_norms} avoids such dependencies:  

\begin{lemma}
\label{lemma:bounded-inverse}
  The operators $\gamma^\pm: H^{1}(\Omega^\pm) \to H^{1/2}(\Gamma)$ satisfy the bounds:
  \begin{align}
    \norm{\gamma^\pm u}_{\abs{s},1/2,\Gamma}
    &\lesssim \norm{u}_{\abs{s},1,\Omega^\pm}.
      \label{eq:est_dirichlet_trace_and_lifting}
  \end{align}
\end{lemma}
\begin{proof}
  The multiplicative trace estimate and Young's inequality give
  \begin{align*}
    \abs{s}^{1/2}\norm{\gamma^\pm u}_{L^2(\Gamma)}
    &\lesssim \big(\norm{u}_{H^1(\Omega^\pm)} \abs{s} \norm{u}_{L^2(\Omega^\pm)}\big)^{1/2} \\
      &\lesssim \big( \norm{u}^2_{H^1(\Omega^\pm)} + \abs{s}^2 \norm{u}^2_{L^2(\Omega^\pm)}\big)^{1/2}.
  \end{align*}
  Combining this with the standard trace estimate concludes the proof in view of (\ref{eq:norm-equivalence}).
\end{proof}

\begin{lemma}[Dirichlet problem]
  \label{lemma:lifting_and_apriori}
  Let $g \in H^{1/2}(\Gamma)$, $f \in L^2(\Omega^\pm)$. For any $s \in \mathscr{S}$ there exists a unique solution to the problem
  \begin{align*}
    -\laplace u + s^2 u &= f  \text{ in $\Omega^\pm$} \qquad \text{and}\qquad \gamma^\pm u =g.
  \end{align*}
  The function satisfies the \textsl{a priori} bound
  \begin{align}
    \label{eq:lifting_apriori}
    \norm{u}_{\abs{s},1,\Omega^\pm}&\lesssim \abs{s}^{-1} \norm{f}_{L^2(\Omega^\pm)} + \norm{g}_{\abs{s},1/2,\Gamma}.
  \end{align}
The implied constant depends only on $\Omega^{\pm}$ and the constants $\sigma_0$, $\delta$ characterizing $\mathscr{S}$. 
\end{lemma}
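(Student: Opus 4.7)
The plan is to reduce the inhomogeneous Dirichlet problem to a homogeneous one via the lifting from the previous lemma, and then apply Lax--Milgram using the coercivity of Lemma~\ref{lemma:well_posedness}. Set $w := \mathcal{E}^\pm g \in H^1(\Omega^\pm)$, so that $\gamma^\pm w = g$ and
$$
\norm{w}_{\abs{s},1,\Omega^\pm} \lesssim \norm{g}_{\abs{s},1/2,\Gamma}
$$
by~\eqref{eq:est_dirichlet_trace_and_lifting}. In the closed subspace $V := \{v \in H^1(\Omega^\pm) : \gamma^\pm v = 0\}$, I would seek $u_0$ satisfying
$$
a_s(u_0, v) = (f, v)_{L^2(\Omega^\pm)} - a_s(w, v) \qquad \forall v \in V,
$$
and then define $u := u_0 + w$, which is the desired solution. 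Existence and uniqueness of $u_0$ follow from Lax--Milgram applied to the form $\zeta a_s$, which is coercive and bounded on $V$ by Lemma~\ref{lemma:well_posedness}; uniqueness for $u$ is an immediate consequence, since any two solutions differ by an element of $V$ lying in the kernel of $a_s$.

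For the a priori estimate, I would test the equation with $v = u_0$. Coercivity then gives
$$
\norm{u_0}_{\abs{s},1,\Omega^\pm}^2 \lesssim \abs{(f, u_0)_{L^2(\Omega^\pm)}} + \abs{a_s(w, u_0)}.
$$
The first term I bound by Cauchy--Schwarz and the elementary trick
$$
\norm{f}_{L^2(\Omega^\pm)} \norm{u_0}_{L^2(\Omega^\pm)} \leq \abs{s}^{-1} \norm{f}_{L^2(\Omega^\pm)} \cdot \norm{u_0}_{\abs{s},1,\Omega^\pm},
$$
trading the missing factor of $\abs{s}$ against the weighted $L^2$ contribution in the norm. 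The second term is handled directly by Cauchy--Schwarz applied separately to the gradient and mass pieces of $a_s$, yielding $\abs{a_s(w, u_0)} \leq \norm{w}_{\abs{s},1,\Omega^\pm} \norm{u_0}_{\abs{s},1,\Omega^\pm}$. Dividing by $\norm{u_0}_{\abs{s},1,\Omega^\pm}$, substituting the lifting bound for $w$, and using the triangle inequality $\norm{u}_{\abs{s},1,\Omega^\pm} \leq \norm{u_0}_{\abs{s},1,\Omega^\pm} + \norm{w}_{\abs{s},1,\Omega^\pm}$ gives~\eqref{eq:lifting_apriori}.

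I do not expect serious obstacles. The only point requiring care is the exterior case, where $\Omega^+$ is unbounded; however, the previous lemma already provides a bounded lifting into $H^1(\Omega^+)$, and the coercivity estimate of Lemma~\ref{lemma:well_posedness} is stated uniformly for $\Omega^\pm$, so the argument applies without modification to both geometries.
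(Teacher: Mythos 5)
Your proof is correct and follows essentially the same route as the paper: lift the Dirichlet data using the bounded lifting of \eqref{eq:est_dirichlet_trace_and_lifting}, then control the zero-trace remainder via the coercivity of $\zeta a_s$ from Lemma~\ref{lemma:well_posedness}, trading $\abs{s}^{-1}$ against the weighted $L^2$ part of the norm. The only (harmless) difference is that the paper takes the lifting from \cite[Lemma 4.22]{melenk_sauter11} to solve the homogeneous equation $-\laplace u_D + s^2 u_D = 0$, so the cross term vanishes and only $(f,\widetilde{u})$ remains, whereas you keep a generic trace lifting and absorb $\abs{a_s(w,u_0)}\leq \norm{w}_{\abs{s},1,\Omega^\pm}\norm{u_0}_{\abs{s},1,\Omega^\pm}$ by continuity, which yields the same bound.
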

\begin{proof}
  Existence follows using the usual theory of elliptic problems. For the \textsl{a priori} bound, we
  first note that by \cite[Lemma~{4.22}]{melenk_sauter11}, there exists a lifting $u_D$ satisfying
  $$
-\laplace u_D + s^2 u_D =0, \qquad \text{}\qquad \norm{u_D}_{\abs{s},1,\Omega^\pm}\lesssim \norm{g}_{\abs{s},1/2,\Gamma} 
\qquad \text{and}\qquad \gamma^\pm u_D=g.
$$
  Thus the remainder $\widetilde{u}:=u-u_D$ solves:
  \begin{align*}
    -\laplace \widetilde{u} + s^2 \widetilde{u}&= f, 
\qquad \widetilde{u}|_{\Gamma} =0.
  \end{align*}
  As the sesquilinear form $a_s$ from Lemma~\ref{lemma:well_posedness} is elliptic, we get
with $\zeta$ defined there
  \begin{align*}
    \norm{\widetilde{u}}_{\abs{s},1,\Omega^\pm}^2
    &\lesssim \Re\big(\zeta a_{s}(\widetilde{u},\widetilde{u})\big) = \Re\big(\zeta\ltwoproduct{f}{\widetilde{u}}\big)  \\
    &  \leq \abs{s}^{-1} \norm{f}_{L^2(\Omega^\pm)}\Big(\!\abs{s}\norm{\widetilde{u}}_{L^2(\Omega^\pm)} \!\Big) 
    \leq \abs{s}^{-1} \norm{f}_{L^2(\Omega^\pm)} \norm{\widetilde{u}}_{\abs{s},1,\Omega^\pm}. \qedhere
  \end{align*}
\end{proof}

\begin{lemma}[Neumann problem]
  \label{lemma:lifting_and_apriori_neumann}
  Let $h \in H^{-1/2}(\Gamma)$. Then for every $s \in \mathscr{S}$ there exists a unique solution to the problem
  \begin{align*}
    -\laplace u + s^2 u &= f \text{ in $\Omega^\pm$} \qquad \text{and}\qquad \partial_n^\pm u =h.
  \end{align*}
   $u$ satisfies the \textsl{a priori} bound
  \begin{align}
    \label{eq:lifting_apriori_neumann}
    \norm{u}_{\abs{s},1,\Omega^\pm}&\lesssim  \norm{h}_{\abs{s},-1/2,\Gamma} + \abs{s}^{-1} \norm{f}_{L^2(\Omega^\pm)}.
  \end{align}  
The implied constant depends only on $\Omega^{\pm}$ and on $\sigma_0$, $\delta$ characterizing $\mathscr{S}$. 
\end{lemma}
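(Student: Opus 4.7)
The argument parallels Lemma~\ref{lemma:lifting_and_apriori}: recast the problem variationally and exploit the coercivity of $a_s$ established in Lemma~\ref{lemma:well_posedness}, but now with the Neumann datum entering as a duality pairing on $\Gamma$. First I would write down the weak formulation. Integration by parts, keeping track of the convention that $\partial_n^\pm$ refers to the normal pointing out of $\Omega^-$, yields the problem: find $u\in H^1(\Omega^\pm)$ such that
\begin{align*}
a_s(u,v) \;=\; \ltwoproduct{f}{v} \;\pm\; \dualproduct{h}{\gamma^\pm v} \qquad \forall v \in H^1(\Omega^\pm),
\end{align*}
where the sign is $+$ in the exterior and $-$ in the interior case (the sign is irrelevant for the estimate). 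Existence and uniqueness follow at once from Lax--Milgram applied to $\zeta a_s$, whose coercivity in $\norm{\cdot}_{\abs{s},1,\Omega^\pm}$ is exactly the content of Lemma~\ref{lemma:well_posedness}.

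Next I would derive the a priori bound by testing with $v=u$ and taking the real part after multiplication by $\zeta$:
\begin{align*}
\norm{u}_{\abs{s},1,\Omega^\pm}^2 \;\lesssim\; \Re\bigl(\zeta a_s(u,u)\bigr) \;\le\; \bigl|\ltwoproduct{f}{u}\bigr| + \bigl|\dualproduct{h}{\gamma^\pm u}\bigr|.
\end{align*}
The volume term is handled as in Lemma~\ref{lemma:lifting_and_apriori} by inserting $\abs{s}^{-1}\cdot\abs{s}$, yielding $\abs{s}^{-1}\norm{f}_{L^2(\Omega^\pm)}\norm{u}_{\abs{s},1,\Omega^\pm}$. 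The boundary term is controlled by the very definition of the weighted dual norm together with the weighted trace estimate~\eqref{eq:est_dirichlet_trace_and_lifting}:
\begin{align*}
\bigl|\dualproduct{h}{\gamma^\pm u}\bigr| \;\le\; \norm{h}_{\abs{s},-1/2,\Gamma}\,\norm{\gamma^\pm u}_{\abs{s},1/2,\Gamma} \;\lesssim\; \norm{h}_{\abs{s},-1/2,\Gamma}\,\norm{u}_{\abs{s},1,\Omega^\pm}.
\end{align*}
Dividing by $\norm{u}_{\abs{s},1,\Omega^\pm}$ produces~\eqref{eq:lifting_apriori_neumann}.

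I do not expect any serious obstacle; the only point warranting a brief remark is the exterior case, where one must justify working in $H^1(\Omega^+)$ without a radiation condition. This is automatic because $\Re(s)>0$ makes $\zeta a_s$ coercive on the full space $H^1(\Omega^+)$, and the weighted trace bound~\eqref{eq:est_dirichlet_trace_and_lifting} ensures that the right-hand side $v\mapsto \ltwoproduct{f}{v} \pm \dualproduct{h}{\gamma^+ v}$ is a bounded antilinear functional on $H^1(\Omega^+)$ with the desired $s$-scaling. Everything else is a matter of assembling the pieces already at hand.
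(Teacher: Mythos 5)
Your proposal is correct and follows exactly the route the paper intends: the paper's proof is the one-line remark ``follows easily from the weak formulation and~\eqref{eq:est_dirichlet_trace_and_lifting}'', and your argument (Lax--Milgram via the coercivity of $\zeta a_s$ from Lemma~\ref{lemma:well_posedness}, testing with $u$, and bounding the boundary duality pairing by $\norm{h}_{\abs{s},-1/2,\Gamma}\norm{\gamma^\pm u}_{\abs{s},1/2,\Gamma}\lesssim \norm{h}_{\abs{s},-1/2,\Gamma}\norm{u}_{\abs{s},1,\Omega^\pm}$) is precisely the omitted detail. Nothing to add.
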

\begin{proof}
  Follows easily from the weak formulation and~\eqref{eq:est_dirichlet_trace_and_lifting}. 
\end{proof}

  We also have the following trace inequality in a weighted $H^{-1/2}$-norm: 
\begin{lemma}
  If $-\laplace u + s^2 u =0$ we can estimate:
  \begin{align*}
    \norm{\partial_n^- u}_{\abs{s},-1/2,\Gamma} &\lesssim \norm{u}_{\abs{s},1,\Omega^-}.
  \end{align*}
\end{lemma}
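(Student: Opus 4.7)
The plan is to prove this by duality, using the weighted lifting bound \eqref{eq:est_dirichlet_trace_and_lifting} in place of the unweighted one that was used in Lemma~\ref{lemma:trace_estimates} (which is what produced the lossy $\abs{s}^{1/2}$ factor there).

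Concretely, I would start from the definition of the dual norm and, for arbitrary $\xi \in H^{1/2}(\Gamma)$, choose a lifting $v \in H^1(\Omega^-)$ with $\gamma^- v = \xi$ satisfying
\[
   \norm{v}_{\abs{s},1,\Omega^-} \lesssim \norm{\xi}_{\abs{s},1/2,\Gamma},
\]
which is exactly the right inverse $\mathcal{E}^-$ supplied by the previous lemma. Then use Green's first identity together with the PDE $-\laplace u + s^2 u = 0$ to rewrite the boundary pairing purely as a volume integral,
\[
   \dualproduct{\partial_n^- u}{\xi}
   = \ltwoproduct{\nabla u}{\nabla v} + s^2 \ltwoproduct{u}{v},
\]
(restricted to $\Omega^-$). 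A Cauchy--Schwarz estimate on each of the two terms immediately gives
\[
   \bigl|\dualproduct{\partial_n^- u}{\xi}\bigr|
   \leq \norm{u}_{\abs{s},1,\Omega^-}\,\norm{v}_{\abs{s},1,\Omega^-},
\]
because the weights $|s|^2$ in the $L^2$ term are precisely what the weighted $H^1$-norm carries. Combining with the lifting bound and taking the supremum over $\xi$ yields the claimed estimate.

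There is no genuine obstacle here; the only subtlety is that, in contrast to Lemma~\ref{lemma:trace_estimates}, one must use the $\abs{s}$-weighted lifting from \eqref{eq:est_dirichlet_trace_and_lifting} rather than the unweighted one from \cite[Prop.~2.5.1]{sayas_book}. This is the whole point of the introduction of the weighted norms in Definition~\ref{def:weighted_norms}: the $|s|$-dependence of the standard trace estimate is absorbed into the norm on the boundary data, and the resulting bound is $s$-uniform. The interior normal convention is immaterial, so the same argument works for $\partial_n^+ u$ on the exterior domain.
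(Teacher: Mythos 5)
Your argument is correct and is exactly the paper's proof: the paper's one-line justification invokes the weak definition of $\partial_n^- u$, Cauchy--Schwarz, and the weighted lifting bound \eqref{eq:est_dirichlet_trace_and_lifting}, which is precisely the duality argument you spell out. No difference in substance, only in level of detail.
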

\begin{proof}
  Follows easily from the weak definition of $\partial_n^- u $, 
the Cauchy-Schwarz inequality, and~\eqref{eq:est_dirichlet_trace_and_lifting}.
\end{proof}

\subsection{Smooth geometries}
\label{sec:smooth-gemoetries}
In order to prove a first version of Theorem~\ref{thm:dtn_bound}, we consider a simplified setting of smooth geometry and
Dirichlet trace. Closely following the ideas from~\cite{MS99,melenk_book}, we construct a lowest order boundary layer function that will
be the basis for all further estimates.

\begin{lemma}[Boundary fitted coordinates]
  \label{lemma:boundary_fitted_coordinates}
  Let $T: \mathcal{O} \subseteq \R^{d-1} \to \Gamma$ be a smooth local parametrization of $\Gamma$.
  Define $F: \mathcal{O} \times (-\varepsilon, \varepsilon) \to \R^d$ as
  \begin{align}
    \label{eq:def_diffeomorphism}
    F(\widehat{x},\rho):=-\rho n(\widehat{x})+ T(\widehat{x}),
  \end{align}
  where $n(\widehat{x})$ is the outer normal vector to $\Omega^-$ at the point $T(\widehat{x})$.
  
  For $\varepsilon>0$ sufficiently small, $F$ is a smooth diffeomorphism onto $F\big(\mathcal{O}\times (-\varepsilon,\varepsilon)\big)$.
  It holds that $F(\mathcal{O} \times (0,\varepsilon)) \subseteq \Omega^-$ and $F(\mathcal{O} \times (-\varepsilon,0)) \subseteq \Omega^+$. Additionally,
  $F$ satisfies
  \begin{align}
    \label{eq:transformation_matrix_structure}
    DF^{-1}(\widehat{x},\rho) DF^{-T}(\widehat{x},\rho)
    &=\begin{pmatrix}
      \widetilde{T}(\widehat{x}) & 0 \\
      0 & 1
    \end{pmatrix}
          + \rho \widetilde{R}(\widehat{x},\rho),          
  \end{align}
  where $\widetilde{T}$ and $\widetilde{R}$ are smooth and $\widetilde{T}(\widehat x)$ is invertible.
\end{lemma}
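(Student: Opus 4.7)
The plan is to carry out the standard tubular neighborhood construction and address the three claims in turn: that $F$ is a diffeomorphism on a sufficiently thin slab, that the two half-slabs land on the correct sides of $\Gamma$, and that the pulled-back Euclidean metric has the asserted block form. First I compute $DF$ at $\rho=0$: the tangential columns $\partial_{\widehat{x}_i} F|_{\rho=0} = \partial_{\widehat{x}_i} T$ span the tangent plane of $\Gamma$ at $T(\widehat{x})$, and the normal column $\partial_\rho F|_{\rho=0} = -\nu(\widehat{x})$ is a unit vector orthogonal to that plane. Hence $DF(\widehat{x},0)$ is invertible for every $\widehat{x}\in\mathcal{O}$, and by smoothness it remains invertible on $\mathcal{O}\times(-\varepsilon,\varepsilon)$ for $\varepsilon$ small. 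Injectivity of $F$ on the slab then follows by a standard argument: if $F(\widehat{x}_1,\rho_1) = F(\widehat{x}_2,\rho_2)$ with the $\rho_i$ small, then the two straight normal segments through $T(\widehat{x}_1)$ and $T(\widehat{x}_2)$ share an endpoint, which, since $T$ is a smooth embedding of the coordinate patch, forces $\widehat{x}_1=\widehat{x}_2$ and then $\rho_1=\rho_2$. For this step I would cite a standard differential-geometry reference rather than reproduce the argument.

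For the inclusions I use the convention fixed in Section~\ref{sect:model_problem} that $\nu$ points out of $\Omega^-$, so $-\nu(\widehat{x}) = \partial_\rho F(\widehat{x},0)$ points into $\Omega^-$. Thus for every $\widehat{x}$ the curve $\rho\mapsto F(\widehat{x},\rho)$ leaves $\Gamma$ into $\Omega^-$ for small positive $\rho$ and into $\Omega^+$ for small negative $\rho$. Combining this with the fact that, by the injectivity already established, $F(\widehat{x},\rho)\notin\Gamma$ for $0<\abs{\rho}<\varepsilon$, a connectedness argument fixes the side on the whole slab.

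The block structure of $DF^{-T}DF^{-1}$ comes from a direct computation at $\rho=0$. The matrix $DF^T DF|_{\rho=0}$ has tangential block equal to the first fundamental form $G(\widehat{x}):=\big(\partial_{\widehat{x}_i}T\cdot\partial_{\widehat{x}_j}T\big)_{ij}$; the mixed tangential--normal entries vanish because $\nu\perp \partial_{\widehat{x}_i}T$; and the normal--normal entry is $\abs{\nu}^2=1$. Inverting yields the advertised block-diagonal leading part with $\widetilde{T}(\widehat{x}):=G^{-1}(\widehat{x})$ in the tangential block and $1$ in the normal position. Since $F$ is smooth, $DF^{-T}DF^{-1}$ depends smoothly on $(\widehat{x},\rho)$, and a first-order Taylor expansion in $\rho$ around $\rho=0$ produces the remainder $\rho\widetilde{R}(\widehat{x},\rho)$ with smooth $\widetilde{R}$, giving~\eqref{eq:transformation_matrix_structure}.

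The only nontrivial point, in my view, is the passage from pointwise invertibility of $DF$ to global injectivity of $F$ on the full slab; everything else reduces to pointwise linear algebra and smooth dependence on parameters. Since the lemma is stated locally via a single chart $T$, this step can be handled by shrinking $\varepsilon$ in dependence on $\mathcal{O}$, or by invoking the standard tubular neighborhood theorem for smooth submanifolds.
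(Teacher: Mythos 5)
Your write-up is sound in substance and proceeds in the same spirit as the paper's proof of the block structure: a direct computation of $DF$ at $\rho=0$, orthogonality of $\nu$ to the tangential columns, and smoothness in $\rho$ to absorb the rest into $\rho\widetilde{R}$; you additionally prove the diffeomorphism and inclusion claims, which the paper dispenses with in one line (its proof only establishes \eqref{eq:transformation_matrix_structure}), so that extra material is consistent with citing the tubular neighborhood theorem. One point deserves care, however: you compute the Gram matrix $DF^T DF$, whose inverse is $DF^{-1}DF^{-T}$, whereas \eqref{eq:transformation_matrix_structure} as written concerns $DF^{-T}DF^{-1}=(DF\,DF^T)^{-1}$, and these are different matrices. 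Yours lives on the parameter domain and is literally block diagonal there --- in fact its off-diagonal entries vanish for every $\rho$, since $\nu\cdot\partial_{\widehat{x}_i}T=0$ and $\nu\cdot\partial_{\widehat{x}_i}\nu=0$ --- with tangential block the inverse of the $\rho$-perturbed first fundamental form, so Taylor expansion in $\rho$ gives exactly the claimed form with $\widetilde{T}=G^{-1}$. The ambient-space matrix $(DF\,DF^T)^{-1}$, by contrast, is block diagonal only after conjugation by an orthonormal frame adapted to the tangent/normal splitting; this is precisely how the paper argues, introducing $Q=\big(e_1,\dots,e_{d-1},\nu\big)$, factoring $DF$ through $Q$, and then inverting and linearizing in $\rho$, which produces $\widetilde{T}=\big(\widetilde{T}_1\widetilde{T}_1^T\big)^{-1}$ rather than $G^{-1}$. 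Since the matrix that actually enters the transformed operator in Lemma~\ref{lemma:asympt_expansion} is the pullback $DF^{-1}DF^{-T}$, and only the facts that the $(d,d)$-entry is $1+\bigO(\rho)$ and the normal--tangential coupling is $\bigO(\rho)$ are used downstream, your computation proves the statement in the form in which it is used; to match the lemma verbatim you should either swap the order of the transposes in your claim or insert the adapted frame $Q$ as the paper does. I would not call this a gap, but be explicit about which of the two matrices you are expanding.
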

\begin{proof}  
  We only show~\eqref{eq:transformation_matrix_structure}.
  We select a smooth orthogonal basis of the tangent space at $T(\widehat{x})$, 
  denoted by $e_1(\widehat{x}),\dots, e_{d-1}(\widehat{x})$.
  This implies that 
  $Q:=\big(e_1(\widehat{x}), \dots, e_{d-1}(\widehat{x}), n(\widehat{x})\big)$ is orthogonal.
\begin{align*}
    DF(\widehat{x},\rho) & = (D_{\widehat{x}}, D_{\rho}) F(\widehat{x},\rho) = 
 (D_{\widehat{x}} T(\widehat{x}) - \rho D_{\widehat{x}}  n(\widehat{x}) , - n(\widehat{x})) \\
& 
    =Q\begin{pmatrix}
      \widetilde{T}_{1} & 0 \\
      0 & - 1
    \end{pmatrix}
          -\rho \big( D_{\widehat{x}} n(\widehat{x}),0\big).
\end{align*}
  Here $\widetilde{T}_1:=(e_1,\dots,e_{d-1})^T D_{\widehat{x}}T(\widehat{x})$, 
  and thus $\|{\widetilde{T}_1}\|_2\leq\norm{D_{\widehat{x}}T(\widehat{x})}_{2}$.
  We further compute:
   \begin{align}
\nonumber 
   DF^{-1} DF^{-T}
    &=\big(DF^T DF\big)^{-1}
      = \left(\begin{pmatrix}
        \widetilde{T}_1^T & 0 \\
        0 & -1
      \end{pmatrix}
            Q^T
            Q
            \begin{pmatrix}
              \widetilde{T}_1 & 0 \\
              0 & -1
            \end{pmatrix}
                  + \rho R_1(\widehat{x},\widehat{\rho})
                  \right)^{-1} \\
    &=\left(
      \begin{pmatrix}
        \widetilde{T}_1^T\widetilde{T}_1 & 0 \\
        0 & 1
      \end{pmatrix}
        + \rho R_1(\widehat{x},\widehat{\rho})\right)^{-1}
\label{eq:foo}
  \end{align}
  where $R_1$ collects the remaining terms. For sufficiently small $\rho>0$, 
depending only on $\norm{D_{\widehat{x}}T}_{2}$
  and $\norm{D_{\widehat{x}}n}_{2}$, we can linearize the inverse in \eqref{eq:foo}
to get~\eqref{eq:transformation_matrix_structure}
with $\widetilde{T}:=\big(\widetilde{T}_1\widetilde{T}_1^T\big)^{-1}$
(the latter inverse exists since $D_{\widehat{x}} T$ and thus also $\widetilde{T}_1$ has full rank).
\end{proof}

\begin{lemma}
  \label{lemma:asympt_expansion}
  Assume that $\Omega^-$ has a smooth boundary $\Gamma$. For any $s \in \mathscr{S}$
  and for every $u \in H^1(\Omega^-)$
  solving 
  $$
  -\laplace u + s^2 u =0
  $$
  together with $\gamma^- u \in H^2(\Gamma)$
  there exists a function
  $u_{BL} \in H^1(\Omega^-)$ with the following properties:
  \begin{enumerate}[(i)]
  \item $\gamma^- u_{BL} = \gamma^- u$,
  \item $\partial_n^- u_{BL} - s \gamma^- u_{BL} =0$,
    \label{it:asymptotic_exp_before_smoothing_DtI}
  \item $-\laplace u_{BL} + s^2 u_{BL} = f$
    with
    \begin{align}
      \label{eq:apriori_defect}
      \norm{f}_{L^2(\Omega^-)} \lesssim \abs{s}^{1/2}\norm{\gamma^- u}_{H^1(\Gamma)} + \abs{s}^{-1/2} \norm{\gamma^- u}_{H^2(\Gamma)}.
    \end{align}
  The implied constant depends only on $\Omega^-$ and $\sigma_0$, $\delta$ characterizing $\mathscr{S}$.
  \item
    \label{it:aysmptotic_exp_away_from_bdry_smooth}
    For $\varepsilon > 0$ define the set 
    $\Omega^-_\varepsilon:=\{ x \in \Omega^-: \operatorname{dist}(x,\Gamma) > \varepsilon \}$.
    Then, the following estimates hold for all $\ell \in \R$ with constants independent of $s$:
    \begin{align*}
      \norm{u_{BL}}_{H^{2}(\Omega^-_{\varepsilon})} &\leq C_{\varepsilon,\ell} \abs{s}^{-\ell}\norm{\gamma u}_{H^2(\Gamma)}.
    \end{align*}    
  \item The analogous statement also holds for the exterior problem $\Omega^+$, replacing $-s$ by $s$ in~(\ref{it:asymptotic_exp_before_smoothing_DtI}).
  \end{enumerate}
\end{lemma}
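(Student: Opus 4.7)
I would mimic the classical boundary-layer construction from~\cite{MS99,melenk_book}: cover $\Gamma$ by finitely many charts $F_j$ as in Lemma~\ref{lemma:boundary_fitted_coordinates}, pick a subordinate partition of unity $\{\varphi_j\}$ on $\Gamma$, and on each chart set
$$
u_{BL,j}(F_j(\widehat x,\rho)) := (\varphi_j g)(\widehat x)\, e^{-s\rho}\chi(\rho), \qquad g:=\gamma^- u,
$$
with $\chi\in C_c^\infty([0,\varepsilon))$ satisfying $\chi(0)=1$ and $\chi'(0)=0$. Extending by zero into $\Omega^-$, the function $u_{BL}:=\sum_j u_{BL,j}$ is the proposed boundary-layer function. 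The ansatz is motivated by the 1d observation that on the half-line $\rho\in(0,\infty)$ the function $\rho\mapsto g\,e^{-s\rho}$ exactly solves $-u''+s^2u=0$ and satisfies the impedance condition $-\partial_\rho u - s u = 0$ at $\rho=0$; all curvature and tangential effects are then perturbations of this 1d model.

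\textbf{Verifying (i), (ii), (iv).} Properties (i) and (ii) follow from direct evaluation at $\rho=0$: summing over $j$ yields $\gamma^- u_{BL}=g$, and the conditions $\chi(0)=1$, $\chi'(0)=0$ combined with $\partial_n^- = -\partial_\rho|_{\rho=0}$ give $\partial_n^- u_{BL} = sg = s\gamma^- u_{BL}$. For (iv), in every chart $\Omega^-_\varepsilon$ corresponds to $\rho \geq \varepsilon$, so $|e^{-s\rho}| \leq e^{-c|s|\varepsilon}$ because $s\in\mathscr{S}$; this exponential absorbs any polynomial factor of $|s|$ produced by differentiation.

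\textbf{Estimating the defect (iii).} This is where the real work lives. By~\eqref{eq:transformation_matrix_structure}, the operator $-\laplace + s^2$ pulled back via $F_j$ takes the form $-\partial_\rho^2 - L_{\widehat x} + s^2$ plus a perturbation $\rho R$ which is second order in $(\widehat x,\rho)$, where $L_{\widehat x}$ is the tangential second-order operator induced by $\widetilde T(\widehat x)$. Applied to the ansatz, the radial part yields $\varphi_j g\, e^{-s\rho}(2s\chi'(\rho)-\chi''(\rho))$, supported in $\{\chi'\neq 0\}\subseteq\{\rho \geq c>0\}$ and hence exponentially small in $|s|$. The tangential part contributes $-L_{\widehat x}(\varphi_j g)\, e^{-s\rho}\chi$, whose $L^2(\Omega^-)$-norm is bounded via $\|e^{-s\rho}\|_{L^2(0,\infty)}\lesssim|s|^{-1/2}$ by $|s|^{-1/2}\|g\|_{H^2(\Gamma)}$. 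The curvature remainder $\rho R$ applied to the ansatz produces terms of the form $\rho\, s^2 \varphi_j g\, e^{-s\rho}\chi$ and $\rho\, s\, D_{\widehat x}(\varphi_j g)\, e^{-s\rho}\chi$; the estimate $\|\rho e^{-s\rho}\|_{L^2(0,\infty)}\lesssim |s|^{-3/2}$ bounds these by $|s|^{1/2}\|g\|_{H^1(\Gamma)}$, producing the first summand of~\eqref{eq:apriori_defect}.

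\textbf{Main obstacle.} The delicate point is the precise bookkeeping of the $\rho\widetilde R$ perturbation and the chain rule in curvilinear coordinates: each term must be tracked without losing a power of $|s|$, and one has to check that tangential derivatives hitting the partition $\varphi_j$ or the volume element $\sqrt{\det(DF_j^T DF_j)}$ neither destroy the exact cancellations enforcing (ii) nor spoil the $|s|$-scaling in (iii). The exterior statement then follows from the same construction with $e^{-s\rho}$ replaced by $e^{s\rho}$ on $\mathcal O_j\times(-\varepsilon,0)$.
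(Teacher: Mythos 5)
Your construction is essentially the paper's own proof: boundary-fitted coordinates as in Lemma~\ref{lemma:boundary_fitted_coordinates}, the ansatz $\widehat g(\widehat x)\,e^{-s\rho}$ localized by a partition of unity and cut off in $\rho$, cancellation of the leading $\partial_\rho^2$-term against $s^2$, the defect estimated through $\|e^{-s\rho}\|_{L^2(\R_+)}\lesssim |s|^{-1/2}$ and $\|\rho e^{-s\rho}\|_{L^2(\R_+)}\lesssim |s|^{-3/2}$ (using $\Re(s)\sim|s|$ in $\mathscr{S}$), and exponential decay for property (iv). The only cosmetic deviations are that the paper applies the smooth cutoff after the fact instead of building $\chi(\rho)$ into the ansatz, and that it records explicitly the first-order $\partial_\rho$-term with smooth, not $O(\rho)$, coefficient coming from the volume element; this term is absorbed by the same $|s|^{1/2}\norm{g}_{H^1(\Gamma)}$ bound, so your bookkeeping remark covers it.
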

\begin{proof}
  We only show the case of the interior problem and abbreviate $g:=\gamma^- u$. 
  We work in boundary fitted coordinates $(\widehat{x},\rho)$ as described in Lemma~\ref{lemma:boundary_fitted_coordinates}.
  First assume, that $\support(g) \subset T(\mathcal{O})$, i.e., $g$ is supported by the part of the boundary parametrized by $T$.
  The change of variables formula shows that if $u$ solves $-\laplace u + s^2 u=f$, then
  $\widehat{u}:=u\circ F$ solves:
  \begin{align*}
    - \nabla \cdot \left( J  DF^{-1} DF^{-T} \nabla \widehat{u}\right)+ J s^2 \widehat{u} &= \widehat{f}  J
  \end{align*}
  with $J:={\operatorname{det}(DF)}$ and $\widehat f = f \circ F$ (see, e.g., \cite[Step~7 of proof of Thm.~4, Sec.~{6.3.2}]{evans98}). 
On the other hand, if $\widehat{u}_{BL}$ satisfies
  \begin{align*}
    - \nabla \cdot \left( J DF^{-1} DF^{-T} \nabla \widehat{u}_{BL}\right)+ J s^2 \widehat{u}_{BL} 
&= \widehat{f}_{BL},
  \end{align*}
  then $u_{BL}:=\widehat{u}_{BL} \circ F^{-1}$ solves
  $$
  -\laplace u_{BL} + s^2 u_{BL} = f_{BL}, \qquad \text{with } f_{BL}:=J^{-1}\widehat{f}_{BL} \circ F^{-1} .
  $$
  We set $\widehat{A}:=DF^{-1} DF^{-T}$ and define with $\widehat g:= g \circ T$ the function 
    \begin{align}
    \label{eq:definition_of_bl_function}
    \widehat{u}_{BL}(\widehat{x},\rho):=e^{-s \rho } \; \widehat g(\widehat{x})
  \end{align}
  in the boundary fitted coordinates.
By \eqref{eq:transformation_matrix_structure}, we have $\widehat A_{d,d} = 1 + \rho \widetilde{R}_{d,d}$. 
Differentiating out we obtain for some smooth functions $c_{ij}$, $a_i$, $b_i$, $d_i$, and $b$
\begin{align*}
& 
-\nabla \cdot (J \widehat A \nabla \widehat{u}_{BL}) + J s^2 \widehat{u}_{BL}  \\
& \qquad = - \sum_{i,j=1}^{d-1} c_{ij} \partial_{\widehat x_i} \partial_{\widehat x_j} \widehat{u}_{BL}   
-\sum_{i=1}^{d-1} a_{i} \partial_{\widehat x_i} \partial_{\rho} \widehat{u}_{BL}   
- \sum_{i=1}^{d-1} b_i \partial_{\widehat x_i} \widehat{u}_{BL}   \\
& \qquad \qquad \mbox{} 
-b \partial_{\rho} \widehat{u}_{BL} - J (1 + \rho \widetilde{R}_{d,d}) \partial^2_{\rho} \widehat{u}_{BL} + J s^2 \widehat{u}_{BL} 
\\
& = - \sum_{i,j=1}^{d-1} c_{ij} \partial_{\widehat x_i} \partial_{\widehat x_j} \widehat{u}_{BL}   
-\sum_{i=1}^{d-1} a_{i} \partial_{\widehat x_i} \partial_{\rho} \widehat{u}_{BL}   
- \sum_{i=1}^{d-1} b_i \partial_{\widehat x_i} \widehat{u}_{BL}   \\
& \qquad \qquad \mbox{} 
-b \partial_{\rho} \widehat{u}_{BL} - J \rho \widetilde{R}_{d,d} \partial^2_{\rho} \widehat{u}_{BL} 
=: \widehat f_{BL}, 
\end{align*}
where, in the last step, we exploited the definition of $\widehat{u}_{BL}$. 
  {}From its definition, one can easily see that $\widehat{u}_{BL}$ satisfies the estimates
  \begin{align*}
    \norm{\partial_\rho \widehat{u}_{BL}}_{L^2(\mathcal{O}\times \R_+)}\! +\! \norm{\partial_\rho \nabla_{\widehat{x}} \widehat{u}_{BL}}_{L^2(\mathcal{O}\times \R_+)}
     \!+\! \norm{\rho \partial_\rho^2 \widehat{u}_{BL}}_{L^2(\mathcal{O}\times \R_+)}
    &\lesssim \frac{\abs{s} \norm{\widehat{g}}_{H^1(\mathcal{O})}}{\sqrt{\Re(s)}}, \\ 
    \norm{\nabla_{\widehat{x}} \widehat{u}_{BL}}_{L^2(\mathcal{O}\times \R_+)}
    + \sum_{i,j=1}^{d-1}{\norm{\partial_{\widehat{x}_i} \partial_{\widehat{x}_j} \widehat{u}_{BL}}_{L^2(\mathcal{O}\times \R_+)}} 
   &\lesssim \frac{\norm{\widehat{g}}_{H^2(\mathcal{O})} }{\sqrt{\Re(s)}}.
  \end{align*}
  
  Transforming back gives~\eqref{eq:apriori_defect} for the part of $\Omega^-$ parametrized by $F$.
  Assertion (\ref{it:aysmptotic_exp_away_from_bdry_smooth}) follows easily from the definition, as the
  exponential decay dominates all powers of $\abs{s}$.
  This allows us to smoothly cut off $u_{BL}$ for large $\rho$ and extend it by $0$ to the whole domain.  
  For general $g$, we use a smooth partition of unity to decompose $g$ into functions with local support.
\end{proof}

As the next step, we lower the regularity requirement on $\gamma^- u$.
\begin{corollary}
  \label{cor:asymptotic_exp_after_smoothing}
  Let $\Omega^-$ have a smooth boundary $\Gamma$. For any $s \in \mathscr{S}$ and 
  for every $u \in H^1(\Omega^-)$ with $\gamma^- u \in H^{1}(\Gamma)$
  solving 
$$
-\laplace u + s^2 u =0
$$ 
  there exists a function
  $u_{BL} \in H^2(\Omega^-)$ with the following properties:
  \begin{enumerate}[(i)]
  \item
    \label{it:asymptotic_exp_after_smoothing_DtI}
    $\partial_n^- u_{BL} - s \gamma^- u_{BL} =0$.
  \item 
    \label{it:asymptotic_exp_after_smoothing_DtI_approx}
    $
      \norm{\partial_n^- (u-u_{BL}) - s\big(\gamma^- u - \gamma^- u_{BL}\big)}_{H^{-1/2}(\Gamma)}
      \lesssim \norm{\gamma^- u}_{H^{1}(\Gamma)}.
    $
  \item
    \label{it:asymptotic_exp_after_smoothing_norm}
    $\norm{u-u_{BL}}_{\abs{s},1,\Omega^-} \lesssim \abs{s}^{-1/2}\norm{\gamma^- u}_{H^{1}(\Gamma)}.$
The implied constant depends only on $\Omega^-$ and the constants $\sigma_0$, $\delta$ characterizing $\mathscr{S}$. 
  \item
    \label{it:aysmptotic_exp_away_from_bdry}
    For $\varepsilon > 0$ introduce 
    $\Omega^-_\varepsilon:=\{ x \in \Omega^-\colon \operatorname{dist}(x,\Gamma) > \varepsilon \}$.
    Then, the following estimates hold for all $\ell \in \R$ with constants independent of $s$:
    \begin{align*}
      \norm{u_{BL}}_{H^{2}(\Omega^-_{\varepsilon})} &\leq C_{\varepsilon,\ell} \abs{s}^{-\ell}\norm{\gamma^- u}_{H^1(\Gamma)}.
    \end{align*}    
\item 
  The analogous statement also holds in the case of the exterior problem upon 
  replacing $s$ by $-s$ in~(\ref{it:asymptotic_exp_after_smoothing_DtI})
  and~(\ref{it:asymptotic_exp_after_smoothing_DtI_approx}).
  \end{enumerate}
\end{corollary}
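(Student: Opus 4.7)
The strategy is to reduce to Lemma~\ref{lemma:asympt_expansion} via an $|s|$-dependent regularization of the Dirichlet data. Set $g:=\gamma^- u$. Since $\Gamma$ is smooth and compact, one can construct (for example through a spectral cutoff at frequency $|s|$ using the Laplace--Beltrami operator $-\Delta_\Gamma$, or via mollification at scale $|s|^{-1}$ in local coordinates on $\Gamma$) a decomposition $g = g_s + g_r$ satisfying
\begin{align*}
  \norm{g_s}_{H^2(\Gamma)} &\lesssim \abs{s}\, \norm{g}_{H^1(\Gamma)}, &
  \norm{g_s}_{H^1(\Gamma)} &\lesssim \norm{g}_{H^1(\Gamma)}, \\
  \norm{g_r}_{H^{1/2}(\Gamma)} &\lesssim \abs{s}^{-1/2}\norm{g}_{H^1(\Gamma)}, &
  \norm{g_r}_{L^2(\Gamma)} &\lesssim \abs{s}^{-1}\norm{g}_{H^1(\Gamma)}.
\end{align*}
In particular $\norm{g_r}_{\abs{s},1/2,\Gamma} \lesssim \abs{s}^{-1/2}\norm{g}_{H^1(\Gamma)}$.

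Next, let $w \in H^1(\Omega^-)$ solve $-\laplace w + s^2 w = 0$ with $\gamma^- w = g_s$, which lies in $H^2(\Gamma)$, and apply Lemma~\ref{lemma:asympt_expansion} to $w$ to produce the boundary layer $w_{BL}$. Define $u_{BL} := w_{BL}$. Property~(\ref{it:asymptotic_exp_after_smoothing_DtI}) is immediate from Lemma~\ref{lemma:asympt_expansion}(ii), and~(\ref{it:aysmptotic_exp_away_from_bdry}) follows from Lemma~\ref{lemma:asympt_expansion}(iv) together with $\norm{g_s}_{H^2(\Gamma)} \lesssim \abs{s}\norm{g}_{H^1(\Gamma)}$, since an arbitrary negative power of $\abs{s}$ is available.

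For~(\ref{it:asymptotic_exp_after_smoothing_norm}), decompose $u - u_{BL} = (u-w) + (w-w_{BL})$. The difference $u - w$ is Helmholtz-harmonic with trace $g_r$, so Lemma~\ref{lemma:lifting_and_apriori} yields $\norm{u-w}_{\abs{s},1,\Omega^-} \lesssim \norm{g_r}_{\abs{s},1/2,\Gamma} \lesssim \abs{s}^{-1/2}\norm{g}_{H^1(\Gamma)}$. The difference $w - w_{BL}$ has vanishing trace and solves $-\laplace(w-w_{BL}) + s^2(w-w_{BL}) = -f_{BL}$ with $\norm{f_{BL}}_{L^2(\Omega^-)} \lesssim \abs{s}^{1/2}\norm{g_s}_{H^1(\Gamma)} + \abs{s}^{-1/2}\norm{g_s}_{H^2(\Gamma)} \lesssim \abs{s}^{1/2}\norm{g}_{H^1(\Gamma)}$ by~\eqref{eq:apriori_defect}, so Lemma~\ref{lemma:lifting_and_apriori} provides $\norm{w-w_{BL}}_{\abs{s},1,\Omega^-} \lesssim \abs{s}^{-1}\norm{f_{BL}}_{L^2(\Omega^-)} \lesssim \abs{s}^{-1/2}\norm{g}_{H^1(\Gamma)}$. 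Summing gives~(\ref{it:asymptotic_exp_after_smoothing_norm}).

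Finally, for the impedance trace estimate~(\ref{it:asymptotic_exp_after_smoothing_DtI_approx}), apply Lemma~\ref{lemma:trace_estimates} to $v := u - u_{BL}$, which satisfies $-\laplace v + s^2 v = -f_{BL}$. The already established bounds give
\begin{align*}
  \norm{\partial_n^- v - s \gamma^- v}_{H^{-1/2}(\Gamma)}
  &\lesssim \abs{s}^{1/2}\norm{v}_{\abs{s},1,\Omega^-} + \abs{s}^{-1/2}\norm{f_{BL}}_{L^2(\Omega^-)}
   \lesssim \norm{g}_{H^1(\Gamma)}.
\end{align*}
The case $\Omega^+$ is analogous, with $-s$ replaced by $s$ in the impedance trace. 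The only nontrivial step is the regularization decomposition on $\Gamma$; this is standard for smooth compact boundaries and will likely be stated as a short preparatory lemma rather than developed from scratch.
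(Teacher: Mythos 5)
Your proposal is correct and takes essentially the same route as the paper: the paper also regularizes $g=\gamma^- u$ by a mollified $\widetilde g\in H^2(\Gamma)$ satisfying exactly your bounds (your $g_s$, $g_r=g-\widetilde g$), solves the auxiliary Helmholtz problem with trace $\widetilde g$, applies Lemma~\ref{lemma:asympt_expansion}, and concludes via Lemmas~\ref{lemma:lifting_and_apriori} and~\ref{lemma:trace_estimates}. The only cosmetic difference is that you apply the impedance-trace estimate directly to $u-u_{BL}$ instead of splitting it into $u-\widetilde u$ and $\widetilde u-u_{BL}$, which is equally valid.
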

\begin{proof}
  In order to apply Lemma~\ref{lemma:asympt_expansion}, we need $H^2$-regularity of $g:=\gamma^- u$.  
  We fix a function $\widetilde{g} \in H^2(\Gamma)$ with the following properties:
  \begin{align}
    \norm{g-\widetilde{g}}_{\abs{s},{1/2},\Gamma}\leq \abs{s}^{-1/2} \norm{g}_{H^{1}(\Gamma)}
    \quad \text{and} \quad
    \norm{\widetilde{g}}_{H^2(\Gamma)}\lesssim \abs{s}^{1} \norm{g}_{H^{1}(\Gamma)}.
    \label{eq:est_mollified_gs}
  \end{align}
This can be either seen by realizing $H^1(\Gamma)$ as the interpolation space
between $L^2(\Gamma)$ and $H^2(\Gamma)$ and using \cite[Lemma]{bramble-scott78} or 
  constructed directly via the usual mollifiers as done in 
  \cite[Thm.~{2.29}]{adams_fournier}: The approximation
    estimate follows from \cite[Eqn (20)]{adams_fournier}
    and an interpolation argument. See also \cite[Sec.~{7.48}]{adams_fournier} for
    how to trade Sobolev regularity for approximation properties of the mollified function.  

  Let $\widetilde{u}$ denote the solution to
  \begin{align*}
    -\laplace \widetilde{u} + s^2 \widetilde{u} &= 0 \qquad \text{and} \qquad \gamma^- \widetilde{u}=\widetilde{g}.
  \end{align*}
  Since $\widetilde{g} \in H^2(\Gamma)$, we can apply Lemma~\ref{lemma:asympt_expansion} to construct $u_{BL}$.
  Assertion (\ref{it:asymptotic_exp_after_smoothing_DtI}) then follows by construction.
  For~(\ref{it:asymptotic_exp_after_smoothing_norm}):
  we note that by Lemmas~\ref{lemma:lifting_and_apriori} and~\ref{lemma:asympt_expansion}:
  \begin{align*}
    \norm{u-u_{BL}}_{\abs{s},1,\Omega^-}
    &\lesssim \norm{u-\widetilde{u}}_{\abs{s},1,\Omega^-} +\norm{\widetilde{u} - u_{BL}}_{\abs{s},1,\Omega^-} \\
    &\lesssim \norm{g-\widetilde{g}}_{\abs{s},1/2,\Gamma} +
      \abs{s}^{-1}\left(\abs{s}^{1/2}\norm{\widetilde{g}}_{H^1(\Gamma)} + \abs{s}^{-1/2}\norm{\widetilde{g}}_{H^2(\Gamma)}\right) \\
    &\lesssim \abs{s}^{-1/2} \norm{g}_{H^1(\Gamma)}.
  \end{align*}
  For~(\ref{it:asymptotic_exp_after_smoothing_DtI_approx}),
  we use Lemma~\ref{lemma:trace_estimates} and \eqref{eq:lifting_apriori} to get that
  \begin{align*}
    \norm{\partial_n^- (u-\widetilde{u}) - s\big(\gamma^- u - \gamma^- \widetilde{u}\big)}_{H^{-1/2}(\Gamma)}
    &\lesssim \abs{s}^{1/2}  \norm{g -\widetilde{g}}_{\abs{s},1/2,\Gamma} \lesssim \norm{g}_{H^{1}(\Gamma)}.
  \end{align*}
  Similarly, we have
  \begin{align*}
    & \norm{\partial_n^- (\widetilde{u}-u_{BL}) - s\big(\gamma^- \widetilde{u} - \gamma^- u_{BL}\big)}_{H^{-1/2}(\Gamma)} \\
    &\quad \lesssim \abs{s}^{1/2} \norm{\widetilde{u} - \widetilde{u}_{BL}}_{\abs{s},1,\Omega^-}+
      \abs{s}^{-1/2}\left(\abs{s}^{1/2}\norm{\widetilde{g}}_{H^1(\Omega^-)} + \abs{s}^{-1/2}\norm{\widetilde{g}}_{H^2(\Omega^-)}\right)  \\
    &\quad \lesssim \norm{g}_{H^{1}(\Gamma)}.
  \end{align*}
  Assertion (\ref{it:aysmptotic_exp_away_from_bdry}) follows directly from Lemma~\ref{lemma:asympt_expansion}~(\ref{it:aysmptotic_exp_away_from_bdry_smooth})
  and~\eqref{eq:est_mollified_gs}.
\end{proof}

\begin{corollary}
\label{cor:decomposition_smooth}
Let $\Omega^-\subset \R^d$ be smooth and $s \in \mathscr{S}$. 
Let $g \in H^1(\Gamma)$ and $u$ solve 
$$
-\Delta u + s^2 u = 0 \quad \mbox{ in $\Omega^-$}, 
\qquad \gamma^- u = g \quad \mbox{ on $\Gamma$.} 
$$
Then 
\begin{align}
  \label{eq:decomposition_smooth_dtn}
\|\partial_n u - s u\|_{H^{-1/2}(\Gamma)}& \lesssim \|g\|_{H^1(\Gamma)}.   
\end{align}
The analogous statement holds for the exterior problem upon replacing $s$ by $-s$ in~\eqref{eq:decomposition_smooth_dtn}.
\end{corollary}
\begin{proof}
  Follows by writing $u=u_{BL} + (u-u_{BL})$. The impedance trace of $u_{BL}$ vanishes
  by Corollary~\ref{cor:asymptotic_exp_after_smoothing} (\ref{it:asymptotic_exp_after_smoothing_DtI}).
  The impedance trace of the remainder is uniformly bounded with respect to $s$
  via Corollary~\ref{cor:asymptotic_exp_after_smoothing} (\ref{it:asymptotic_exp_after_smoothing_DtI_approx}). 
\end{proof}

\subsection{Polygons}
In this section, we consider a polygonal domain $\Omega\subset {\mathbb R}^2$ as an example of a non-smooth domain. 
In order to match the boundary layer solutions from Lemma~\ref{lemma:asympt_expansion} at corners,
we solve an appropriate transmission problem, similarly to what was done in~\cite{melenk_book}.
We refer to Figure~\ref{fig:corner_geometry} for the geometric situation.  

We first need one additional Sobolev space. For a smooth curve $\Gamma'$ and $\theta \in [0,1]$,
we introduce 
$$
\widetilde{H}^{\theta}(\Gamma')
:=\big\{u \in H^{\theta}(\Gamma'):\; \norm{u}_{\widetilde{H}^\theta(\Gamma')}:=\norm{u}_{H^{\theta}(\Gamma')} + \|d_{\partial \Gamma'}^{-\theta} u\|_{L^2(\Gamma')} < \infty\big\},
$$
where $d_{\partial \Gamma'}$ denotes the distance to the endpoints of $\Gamma'$.

\subsubsection{A transmission problem in a cone}
In this section, we investigate certain transmission problems. These will allow us
to match different boundary layer functions in the vicinity of a corner of the
domain.
We start by investigating the special case of a  transmission problem on a
sector or an infinite cone. Due to its special structure, we can derive
sharper estimates for the normal derivative than what can be obtained 
from the energy methods used in Lemma~\ref{lemma:transmission_lifting} below.
  \newcommand{\shat}{\hat{s}}

We introduce some notation.
   Given $\omega \in (0,\pi)$, we define the infinite cone
   \begin{equation}
\label{eq:cone}
   \mathcal{C}:=\{ (r\cos \varphi,r\sin\varphi)\colon r>0, \abs{\varphi} < \omega\}
   \end{equation}
   with opening angle $2 \omega$ and $\mathcal{C}^\prime$ by removing from $\mathcal{C}$ its bisector:  
\begin{equation}
\label{eq:cone-without-Gamma0}
\mathcal{C}^\prime:= \mathcal{C} \setminus \{(r,0) \colon r > 0\}. 
\end{equation}
   Next, we define the sector
   $S_{\omega}:=\{(r\cos \varphi,r\sin\varphi), \, r\in (0,1), \abs{\varphi} \in (0,\omega)\}$,
   which is just the truncated cone $\mathcal{C} \cap B_{1}(0)$.
   For its boundary, we write $\Gamma_{\pm \omega}:=\big\{ (r \cos(\pm \omega),r \sin(\pm \omega)), \; r\in (0,1)\big\}$
   for the two parts of the boundary of the sector that are adjacent 
   to the origin and set $\Gamma_{S}:= \Gamma_\omega \cup \Gamma_{-\omega}$.
Finally, we need to define the normal jump across interfaces.
If $\Gamma'$ denotes a smooth interface separating domains $\mathcal{O}_1$ and $\mathcal{O}_2$
we define the normal jump across $\Gamma'$ via
\begin{align*}
  \normaljump{u}:=\nabla u|_{\mathcal{O}_1} \cdot n_1+ \nabla u|_{\mathcal{O}_2} \cdot n_2,  
\end{align*}
where the normal vectors $n_{j}$ are taken to point out of $\mathcal{O}_j$ respectively.

  \begin{lemma}
    \label{lemma:transmission_on_cone_dimensionless} 
    Consider the solution $\uhat \in H^1(\mathcal{C})$ to the following problem
    on the infinite cone
    for $\mu>0$ and $\widehat{s} \in \mathscr{S}$ with $\abs{\widehat{s}}=1$:
    \begin{align}
      \label{eq:transmission_on_cone_dimensionless}
    -\laplace \uhat + \shat^2\uhat =0 \text{ in } \mathcal{C}',
      \quad \normaljump{\uhat}=e^{- \shat \mu x_1} \; \text{on $\R_+ \times\{0\}$}, \quad
    \uhat =0 \;\text{on $\partial \mathcal{C}$}.
    \end{align}    

    Then, the following statements hold for $\uhat$:
    \begin{enumerate}[(i)]
    \item
          \label{item:lemma:transmission_on_cone_dimensionless-i} 
         For each $\ell \in {\mathbb N}$ there exist constants 
         $C_{\ell}$, $\alpha_{\ell} >0$ such that for all $r \ge  1$
\begin{align*}
\|\uhat\|_{W^{\ell,\infty}(\mathcal{C}^{\prime}\setminus B_r(0))}
  &\leq C_{\ell} e^{-\alpha_{\ell} r}. 
\end{align*}%
\item \label{item:lemma:transmission_on_cone_dimensionless-ii} 
There exists a constant $C>0$ such that
         $\partial_n u$ satisfies the estimates
         \begin{align}
           \label{eq:estimates_on_cone_dimensionless}
           \norm{\partial_n \uhat}_{L^2(\partial \mathcal{C})}
           + \norm{ \partial_n \uhat}_{L^1(\partial \mathcal{C})}
           &\leq C. 
         \end{align}
       \end{enumerate}
       The constants  depend only on the opening angle $2\omega$, the parameter $\mu$, 
       and the choices of $\sigma_0$ and $\delta$ in the definition of $\mathscr{S}$.
     \end{lemma}
  \begin{proof}
We first show 
         (\ref{item:lemma:transmission_on_cone_dimensionless-ii})
in Steps~1--3 and then 
         (\ref{item:lemma:transmission_on_cone_dimensionless-i}) in Step~4. 

\emph{Step~1:}
    We start with an energy estimate in exponentially weighted spaces,   
    namely, for any $0<\alpha < \mu \Re(\widehat{s})$ the following estimate holds:
    \begin{equation}
\label{eq:transmission-scaled-10}
    \norm{e^{\alpha r}\nabla \uhat}^2_{L^2(\mathcal{C})} + \norm{e^{\alpha r}\uhat}^2_{L^2(\mathcal{C})}
    \leq C
    \end{equation}
    with a constant $C$ only depending on $\alpha$, $\mu$, $\omega$, and $\widehat{s}$.
    
    We fix some notation. We write
    $\norm{ u }^2_{1,\alpha}:=\norm{e^{\alpha r}\nabla u}^2_{L^2(\mathcal{C})} + \norm{e^{\alpha r}u}^2_{L^2(\mathcal{C})}$,
    and analogously for $\norm{ u }_{1,-\alpha}$. Also we set  $h(x_1,x_2):=e^{-\widehat{s} \mu x_1}$ for
    the transmission data.
    The proof follows  \cite[Prop.~{6.4.6}]{melenk_book} verbatim. 
    The sesquilinear form 
    $B(u,v):=(\nabla u,\nabla u)_{L^2(\mathcal{C})}+ \shat^2 (u,v)_{L^2(\mathcal{C})}$ 
satisfies an inf-sup condition: There is $c > 0$ depending only on 
    $\alpha \in [0,1)$ such that 
    $$
    \inf_{u\neq 0} \sup_{v \neq 0}{ \frac{| B(u,v)|} {\norm{u}_{1,\alpha}\norm{v}_{1,-\alpha}}}
    \geq c >0 .
    $$
    This can be seen by taking, for given $u$ in the infimum, 
    the function $v:= \overline{\widehat{s}}e^{2 r} u$ in the supremum and performing elementary calculations.
    Next, we show that $|(h,\gamma v)_{L^2({\mathbb R}^+\times \{0\})}| \leq C \|v\|_{1,-\alpha}$. 
    This follows also verbatim \cite[Prop.~{6.4.6}]{melenk_book} using~\cite[Lemma~{A.1.8}]{melenk_book}.
    Specifically, by \cite[Lemma~{A.1.8}]{melenk_book} it suffices to 
    ascertain that for $\alpha < \mu \Re (\widehat{s})$ we have 
    \begin{align*}
      \int_{-\omega}^{\omega}{\int_{0}^{\infty}{ r\abs{e^{\alpha r} e^{- \mu \widehat{s}r }}^2 \, +
      r \abs{r \nabla (e^{\alpha r} e^{- \mu \widehat{s}r }) }^2\, dr}d\varphi}
      &\lesssim \int_{0}^{\infty}{ (r+r^3)\abs{e^{2(\alpha - \mu \widehat{s}) r} }\, dr} \\ 
      & < \infty     . 
    \end{align*}
We conclude that the solution $\uhat$ satisfies 
$\|\uhat\|_{1,\alpha} \leq C$ for some constant $C > 0$ depending only
on the choice of $\alpha < \mu \Re(\widehat{s})$. 

\emph{Step 2:}
    For a ball $B_\rho(x)$ of radius $\rho=\bigO(1)$ around any point $x \in \mathcal{C}$ with
    $\operatorname{dist}(x,0)> 2\rho$ we can apply 
    standard elliptic regularity (interior regularity, regularity for homogeneous Dirichlet conditions, and regularity for transmission 
problems---see, e.g., \cite[Lemmas~{5.5.5}, {5.5.7}, {5.5.8}]{melenk_book}) to get 
    \begin{equation}
\label{eq:foobar}
    \norm{\uhat}_{H^2(B_{\rho}(x) \cap \mathcal{C}^{\prime})}\lesssim \norm{ \uhat }_{H^1(B_{2\rho}(x) \cap \mathcal{C})} \lesssim
    e^{-\alpha \operatorname{dist}(x,0)-2\rho} \norm{\uhat}_{1,\alpha}.
    \end{equation}
    A Besicovich covering argument (see, e.g., \cite[Lemma~{4.2.14}]{melenk_book} for details) by such balls and local trace estimates show
    that for $\mathcal{C}_{\infty}:={\mathcal{C}} \setminus \overline{B_1(0)}$
    \begin{align}
      \label{eq:estimates_on_cone_asymptotic}
    \norm{\partial_n \uhat}_{L^2(\partial \mathcal{C}_{\infty})}    
    + \norm{\partial_n \uhat}_{L^1(\partial \mathcal{C}_{\infty})}
      \lesssim \|\uhat\|_{1,\alpha} \lesssim 1, 
    \end{align}
    where the implied constant depends only on $\alpha$, $\omega$, and $\widehat{s}$, $\mu$. 

\emph{Step 3:}
    We show that
    $\norm{\partial_n \uhat}_{L^2(\partial \mathcal{C} \cap B_1(0))}< \infty$. 
    Fix a cut-off function $\chi$ with $\chi \equiv 1$ on $B_1(0)$  and 
    $\operatorname{supp}(\chi) \subseteq B_2(0)$.
We consider the following lifting of the jump $h$ using a single layer 
potential for the Laplacian: 
\begin{equation*}
j(x_1,x_2):= - \frac{1}{2\pi} \int_{\xi=0}^2 
\ln |(x_1,x_2) - (\xi,0)| h(\xi,0)\,d\xi.  
\end{equation*}
Since $h \in L^2({\mathbb R}^+ \times \{0\} \cap B_2(0))$, we have by 
the mapping properties of the single layer potential 
    (see~\cite[Thm.~{2.17}]{acta_numerica12}) that 
$j|_{\partial( \mathcal{C} \cap B_2(0))} 
\in H^1(\partial\mathcal{C} \cap B_2(0))$ with 
\begin{equation*}
\|j\|_{H^1(\partial\mathcal{C} \cap B_2(0))} \leq C
\end{equation*}
for some $C > 0$ depending on $\mu$ and $\hat s$. The jump relations
of the single layer operator provide 
(see, e.g., \cite[Thm.~{6.11}]{book_mclean})
    $\llbracket{\gamma j \rrbracket}=0$ and
    $\llbracket{\partial_n  j}\rrbracket=h$ 
on $(0,2) \times \{0\}$. Since 
$-\Delta j = 0$ and $j \in H^1_{loc}({\mathbb R}^2)$, 
and $\operatorname{supp} \chi \subset B_2(0)$ 
we see that $\widetilde u:= \chi (\uhat - j)$ is 
the $H^1$-function solving 
\begin{align*}
-\Delta \widetilde u + \hat s^2 \widetilde u &=  - \hat s^2 \chi \,j
+ 2 \nabla \chi \cdot \nabla (\uhat -j) + \Delta \chi (\uhat -j) =:\widetilde f
\qquad 
\mbox{ in $\mathcal{C} \cap B_2(0)$},  \\
  \widetilde u &= - j\chi\quad \mbox {on $\partial\mathcal{C} \cap B_2(0)$},
                 \quad \text{and} \quad
 \widetilde u = 0 \quad \mbox{ on $\mathcal{C} \cap \partial B_2(0)$.}
\end{align*}
Since the right-hand side $\widetilde f \in L^2(\mathcal{C} \cap B_2(0))$
and the Dirichlet boundary conditions are in 
$H^1(\partial(B_2(0) \cap \mathcal{C}))$, standard elliptic regularity
theory 
(see, e.g., \cite[Thm.~{4.24}]{book_mclean}) shows $\partial_n \widetilde u \in L^2(\partial(B_2(0)\cap\mathcal{C}))$. 

\emph{Step 4 (proof of (\ref{item:lemma:transmission_on_cone_dimensionless-i})):} 
The 2D Sobolev embedding theorem $H^2 \subset L^\infty$ 
and \eqref{eq:foobar} 
show the desired estimate for $\ell = 0$. 
The argument leading to \eqref{eq:foobar} can be iterated and thus 
yields the stated estimates for any fixed $\ell$. 

\emph{Step 5:}
    Inspection of the proof reveals that all constants (if at all) depend continuously on $\shat$. Since we are only interested in $\shat$ in a compact set 
  determined by the constants from the definition of
    $\mathscr{S}$ we can make all the constants independent of $\hat s$. 
  \end{proof}

  Having studied the transmission problem in a dimensionless form 
in Lemma~\ref{lemma:transmission_on_cone_dimensionless}, we can 
  transfer the results to the setting we actually require 
  using a scaling argument.
  \begin{lemma}
    \label{lemma:estimates_on_cone1}
    Fix $\omega \in (0,\pi)$.
    For $s \in \mathscr{S}$ and $\mu>0$, let $u \in H^1(S_\omega)$ solve the transmission problem on the
    sector $S_{\omega}$:
    \begin{equation}
\begin{split}
      \label{eq:transmission_on_cone}
      -\laplace u + s^2 u & =0 \text{ in } S_\omega \setminus (0,1)\times \{0\}, \\
      \ \normaljump{ u}&=s\,e^{-s \mu\,x_1} \;\text{on $(0,1) \times\{0\}$}, \qquad 
      u=0 \;\text{on $\partial S_{\omega}$}.
\end{split}
    \end{equation}
    Recall that $\Gamma_S$ denotes the parts of $\partial S_{\omega}$ adjacent to the origin.
    Then 
      \begin{align}
        \norm{\partial_n u}_{L^2(\Gamma_S)} \leq C \abs{s}^{1/2} \quad \text{ and } \quad
        \label{eq:asymptotics_on_cone1} 
        \norm{\partial_n u}_{L^1(\Gamma_S)} \leq C . 
      \end{align}
      The constants $C>0$ depend only on $\omega$, 
      the parameter $\mu$, 
      and the choices of $\sigma_0$ and $\delta$ in the definition of $\mathscr{S}$.
  \end{lemma}
  \begin{proof}

We denote by $\Gamma_{1} = \partial B_1(0) \cap \partial S_\omega$ the 
circular arc that is part of $\partial S_\omega$. 
    Write $s=  |s| \hat s$ with $\hat s \in\{\hat s \in \mathscr{S}\,|\, |\hat s|= 1\}$. 
    Let $\uhat$ be the function solving 
    \begin{align*}
      -\laplace \uhat + \shat^2\uhat =0 \text{ in } \mathcal{C}',
      \quad \normaljump{\hat{u}}=e^{- \shat \mu r} \; \text{on $\R_+ \times\{0\}$}, \quad
      \uhat=0 \;\text{on $\partial \mathcal{C}$}
    \end{align*}
    that is given by Lemma~\ref{lemma:transmission_on_cone_dimensionless}. 
    Then we define $u_1(x):=\uhat(|s| \,x)$.   
    Lemma~\ref{lemma:transmission_on_cone_dimensionless} 
     and a simple scaling argument
    gives the following estimates for $u_1$ (for any $j$): 
      \begin{align*}
        \norm{\partial_n u_1}_{L^2(\Gamma_S)} \lesssim \abs{s}^{1/2}, \quad  
        \norm{\partial_n u_1}_{L^1(\Gamma_S)} \lesssim 1, \quad
        \|u_1\|_{W^{j,\infty}(\Gamma_1\setminus (1,0))} \lesssim |s|^j e^{-\alpha_j |s|}.   
      \end{align*}
      The remainder $\delta:= u-u_1 \in H^1(S_\omega)$ then solves 
\begin{align*}
& -\Delta \delta + s^2 \delta = 0 \quad \mbox{ in $S_\omega$}, 
\qquad \delta|_{\Gamma_S} = 0, \qquad \delta|_{\Gamma_1} =  - u_1|_{\Gamma_1}. 
\end{align*}
We note that for this Dirichlet problem with piecewise smooth data that 
are exponentially small in $|s|$,  Lemma~\ref{lemma:lifting_and_apriori}
gives $\norm{\delta}_{|s|,1,S_\omega} \lesssim \|u_1\|_{|s|,1/2,\Gamma_1}$, 
which is exponentially small in $|s|$.
Applying \cite[Thm~{4.24}]{book_mclean} then gives,
  since $u_1$ vanishes on $\partial \Gamma$:
  \begin{align*}
    \|\partial_n \delta\|_{L^1(\Gamma_S)}& \lesssim \|\partial_n \delta\|_{L^2(\Gamma_S)}
    \lesssim |s| \norm{\delta}_{|s|,1,S_{\omega}} + \norm{u_1}_{H^1(\Gamma_1)}
  \end{align*}

which is again exponentially small in $|s|$. The estimate 
        \eqref{eq:asymptotics_on_cone1} follows. 
  \end{proof}

We need the following modification of \cite[Lemma~{3.13}]{MPW17}.
\begin{proposition}
  \label{prop:interpolation_with_log}
  Let $\mathcal{O}$ be a Lipschitz domain.
  Define the Besov space (cf.\ \eqref{eq:besov-norms})
  $$
  B^{1/2}_{2,1}(\mathcal{O}):=\big[L^2(\mathcal{O}),H^1(\mathcal{O})\big]_{1/2,1}.
  $$   
  For $\varepsilon>0$ and every $w \in H^{1/2}(\mathcal{O})$, there exists a function
  $w_{\varepsilon} \in H^1(\mathcal{O})$ with 
  \begin{align*}
  \varepsilon^{- 1/2} \norm{w - w_{\varepsilon}}_{L^2(\mathcal{O})} 
    &\leq C \norm{w}_{H^{1/2}(\mathcal{O})}
      \quad \text{and}  \quad
  \norm{w_{\varepsilon}}_{B^{1/2}_{2,1}(\mathcal{O})}
   \leq C \big(1+\sqrt{\abs{\log(\varepsilon)}}\big)\norm{w}_{H^{1/2}(\mathcal{O})}.
 \end{align*} 
  The constant  depends only on the domain $\mathcal{O}$.
\end{proposition}
\begin{proof}
  This is essentially \cite[Lemma~{3.13}]{MPW17}. The only modification needed is that
  we consider the $H^{1/2}(\mathcal{O})$-norm on the right-hand side instead of the
  $B^{1/2}_{2,\infty}$-norm, which is the reason for getting penalized by a factor $\sqrt{\abs{\log(\varepsilon)}}$ instead of a factor 
   $\abs{\log(\varepsilon)}$ as in \cite[Lemma~{3.13}]{MPW17}.
  The result follows from the same proof, only noting that one can bound using the Cauchy-Schwarz inequality
  \begin{multline*}
  \int_{\varepsilon}^{1}{t^{-\theta}
    \inf_{v \in H^1}\big(\norm{w-v}_{L^2(\mathcal{O})} + t {\norm{v}_{H^1(\mathcal{O})}}\big)
    \frac{dt}{t}} \\
  \leq
    \Big(\int_{\varepsilon}^{1}{t^{-2\theta}
      \inf_{v \in H^1}\big(\norm{w-v}_{L^2(\mathcal{O})} + t {\norm{v}_{H^1(\mathcal{O})}}\big)^2
      \frac{dt}{t}}\Big)^{1/2}
  \Big(\int_{\varepsilon}^{1}{\frac{dt}{t}}\Big)^{1/2},
\end{multline*}
and the last factor produces a factor $1+\sqrt{|\log(\varepsilon)|}$.
\end{proof}

\begin{lemma}
  \label{lemma:apriori_on_cone}
  Let $u$ solve \eqref{eq:transmission_on_cone}.
  Then,  there exists a constant $C>0$ 
  depending only on $\omega$, $\mu$ and the parameters in the definition of $\mathscr{S}$ such that 
  \begin{align*}
    \norm{\partial_n u}_{H^{-1/2}(\Gamma_S)}
    &\leq C \sqrt{\log(\abs{s}+2)}. 
  \end{align*}
\end{lemma}
\begin{proof}
  For $w \in H^{1/2}(\Gamma)$,
  select $w_{\varepsilon}$ as in Proposition~\ref{prop:interpolation_with_log} with $\varepsilon>0$ to
  be chosen later. We calculate for
  $\Gamma_{\pm \omega}$, i.e., the two parts of $\partial {\Omega}$ adjacent to the origin: 
  \begin{align*}
    \big|{\big({\partial_n u,w}\big)_{L^2(\Gamma_{\pm\omega})}}\big|
    &\leq \big|{\big({\partial_n u,w -w_{\varepsilon}}\big)_{L^2(\Gamma_{\pm\omega})}}\big|
      +\big|{\big({\partial_n u,w_{\varepsilon}}\big)_{L^2(\Gamma_{\pm\omega})}}\big| \\
    &\leq \norm{\partial_n u}_{L^2(\Gamma_{\pm\omega})}\norm{w - w_{\varepsilon}}_{L^2(\Gamma_{\pm\omega})}
      +  \norm{\partial_n u}_{L^1(\Gamma_{\pm \omega})}\norm{w_{\varepsilon}}_{L^\infty(\Gamma_{\pm \omega})}.
  \end{align*}
  Since $\Gamma_{\pm \omega}$ is a one-dimensional line segment, 
we can use the Sobolev embedding~\cite[Sec.~{32}]{tartar}
  to estimate 
  $$
  \norm{w_{\varepsilon}}_{L^{\infty}(\Gamma_{\pm \omega})} \lesssim \norm{w_{\varepsilon}}_{B^{1/2}_{2,1}(\Gamma_{\pm \omega})}.
  $$
  Overall, we get using the properties of $w_{\varepsilon}$ from Proposition~\ref{prop:interpolation_with_log}
  and the estimates on $\partial_n u$ from~\eqref{eq:estimates_on_cone_asymptotic}:
  \begin{align*}
     \big|{\big({\partial_n u,w}\big)_{L^2(\Gamma_{\pm \omega})}}\big| 
    & \lesssim 
      \norm{\partial_n u}_{L^2(\Gamma_{\pm \omega})}\varepsilon^{1/2} \norm{w}_{H^{1/2}(\Gamma_{\pm \omega})}+
      (1\!+\!\!\sqrt{\abs{\log(\varepsilon)}})\norm{\partial_n u}_{L^1(\Gamma_{\pm \omega})}\norm{w}_{H^{1/2}(\Gamma_{\pm \omega})}\\
    &\lesssim
      \Big(1 + \abs{s}^{1/2} \varepsilon^{1/2}  +\sqrt{\abs{\log(\varepsilon)}}\Big)\norm{w}_{H^{1/2}(\Gamma_{\pm \omega})}.
  \end{align*}
  Choosing $\varepsilon = \abs{s}^{-1}$ completes the proof.
\end{proof}

We are now in a position to study a more general transmission problem, 
namely, allowing for Dirichlet jumps and more general Neumann transmission data.
\begin{lemma}[Transmission problem]
  \label{lemma:transmission_lifting}
  Let $\mathcal{O}\subset \R^2$ be an open Lipschitz domain.
  Let $\Gamma' \subset \mathcal{O}$ be a smooth interface
  that splits $\mathcal{O}$ into two disjoint Lipschitz domains $\mathcal{O}_{1}$
  and $\mathcal{O}_{2}$.

  Given $g \in \widetilde{H}^{1/2}(\Gamma')$, $h \in H^{-1/2}(\Gamma')$,  
  there exists a unique solution $u \in H^1(\mathcal{O}_1 \cup \mathcal{O}_2)$ to the following problem:
  \begin{align*}
    -\laplace u + s^2 u &= 0 \quad\text{in $\mathcal{O}_1 \cup \mathcal{O}_2$},\\ 
\gamma^- u& =0 \quad \text{on $\partial \mathcal{O}$,} \qquad
    \tracejump{u}=g \, \text{ and }\,  \normaljump{u}=h \quad \text{across } \Gamma'.
  \end{align*}
  Additionally, the following estimate holds:
  \begin{align}
\label{eq:lemma:transmission_lifting-10}
    \norm{u}_{\abs{s},1,\mathcal{O}}&\lesssim\norm{g}_{\abs{s},1/2,\Gamma'} + \|d_{\partial \Gamma'}^{-1/2} g\|_{L^2(\Gamma')}+ \norm{h}_{\abs{s},-1/2,\Gamma'}.
  \end{align}  
  If $\mathcal{O}_1$ and $\mathcal{O}_2$ are polygons,
     $\Gamma'$ is a straight line, and $h$ can be decomposed as
  $$
  h(x)=h_1 s e^{-\mu s \abs{x}} + h_2(x)
  $$
  for some $h_1\in \R$, $\mu >0$, and $h_2 \in H^{-1/2}(\Gamma')$, then
$\partial_n u$ exists pointwise almost everywhere and 
\begin{align}
  \nonumber
 \|\partial_n u\|_{H^{-1/2}(\partial\mathcal{O})} 
&\lesssim 
 |s|^{1/2} \left(
\norm{g}_{\abs{s},1/2,\Gamma'} + \|d_{\partial \Gamma'}^{-1/2} g\|_{L^2(\Gamma')}+
\norm{h_2}_{\abs{s},-1/2,\Gamma'} \right)  
\label{eq:lemma:transmission_lifting-20} \\
 &\qquad \mbox{}
+ \abs{h_1} \sqrt{\log(\abs{s}+2)}.
\end{align}

\end{lemma}
\begin{proof}
\emph{Proof of \eqref{eq:lemma:transmission_lifting-10}:}
  Since $g$ is assumed in $\widetilde{H}^{1/2}(\Gamma')$,
  we can extend it by $0$ to a function $\widetilde{g} \in H^{1/2}(\partial \mathcal{O}_1)$
  such that (see for example~\cite[Thm.~{3.33}]{book_mclean})
  $$\norm{\widetilde{g}}_{\abs{s},1/2,\partial \mathcal{O}_1}
  \lesssim \norm{g}_{\abs{s},1/2,\Gamma'} + \|d_{\partial \Gamma'}^{-1/2} g\|_{L^2(\Gamma')}. 
  $$

  We solve a Dirichlet problem on $\mathcal{O}_1$ with data $\widetilde{g}$ to obtain $u_1$ and extend it by $0$ to
  $\mathcal{O}_2$. Then we solve the following problem
  on $\mathcal{O}$: Find $u_2 \in H_0^1(\mathcal{O})$ such that 
  \begin{align*}
    & (\nabla u_2,\nabla v)_{L^2(\mathcal{O})} + s^2 (u_2,v)_{L^2(\mathcal{O})}=  
     \langle{h},{\gamma_{{\Gamma'}} v}\rangle_{\Gamma'} - (\nabla u_1,\nabla v)_{L^2(\mathcal{O}_1)} - s^2 (u_1,v)_{L^2(\mathcal{O}_1)} 
\quad \forall v \in H_0^1(\mathcal{O}),
  \end{align*}
  where $\gamma_{\Gamma'}$ denotes the trace operator on $\Gamma'$.
  The function $u:=u_1+u_2$ then solves the transmission problem. 
The estimate 
\eqref{eq:lemma:transmission_lifting-10}
follows from Lemmas~\ref{lemma:lifting_and_apriori} to bound $u_1$
and, in order to bound $u_2$,  the ellipticity of the sesquilinear form (see Lemma~\ref{lemma:well_posedness})
together with the trace estimate (Lemma~\ref{lemma:bounded-inverse})
to estimate the contribution $\langle{h},{\gamma_{{\Gamma'}} v}\rangle_{\Gamma'}$.

\emph{Proof of \eqref{eq:lemma:transmission_lifting-20} for $h_1 = 0$:}
Introduce 
$\mathcal{V}:= 
\partial\Gamma^\prime \cup \{\text{vertices of $\Omega$}\}$.  
Since $\mathcal{O}_1$ and $\mathcal{O}_2$ are piecewise smooth 
and $u = 0$ on $\partial\mathcal{O}$ and the right-hand side is homogeneous,
the solution $u$ is smooth up to the boundary with the exception of the 
vertices of $\Omega$ and near the interface $\Gamma^\prime$. 
Hence, $\partial_n u$ exists pointwise everywhere on 
$\partial\mathcal{O}\setminus \mathcal{V}$.  
To show the estimate \eqref{eq:lemma:transmission_lifting-20}, 
we consider test functions 
$v \in V:= \{v \in C^\infty(\overline{\mathcal{O}})\,|\, v \text{ vanishes in a
neighborhood of $\mathcal{V}$}\}$. Since $\partial_n u$ exists pointwise 
on $\partial\mathcal{O}\setminus\mathcal{V}$ and $v \in V$ vanishes in a neighborhood
of $\mathcal{V}$, the duality pairing 
$\langle \partial_n u,v\rangle_{\partial\mathcal{O}}$ is well-defined and 
an integration by parts gives 
\begin{equation}
\label{eq:lemma:transmission_lifting-30}
\langle \partial_n u,v\rangle_{\partial\mathcal{O}}
 = (\nabla u,\nabla v)_{\mathcal{O}_1 \cup \mathcal{O}_2} + 
s^2 (u,v)_{\mathcal{O}} - 
\langle h,\gamma_{\Gamma'} v\rangle_{\Gamma^\prime}. 
\end{equation}
Since $V$ is dense in $H^1(\Omega)$ (because $\mathcal{V}$ consists
of finitely many points), the equation 
(\ref{eq:lemma:transmission_lifting-30}) actually holds for all
$v \in H^1(\Omega)$. Given $\xi \in H^{1/2}(\partial\mathcal{O})$ we 
select $v_\xi \in H^1(\Omega)$ with $v|_{\partial\mathcal{O}} = \xi$ as the 
lifting given by (\ref{eq:energy-minizing-extension})
in Lemma~\ref{lemma:trace_estimates}, which satisfies 
$\|v_\xi\|_{|s|,1,\mathcal{O}} \lesssim |s|^{1/2} \|\xi\|_{H^{1/2}(\partial\mathcal{O})}$. This implies 
$$
|\langle\partial_n u, \xi\rangle_{\mathcal{O}} | 
\leq \|u\|_{|s|,1,\mathcal{O}_1 \cup \mathcal{O}_2} \|v_\xi\|_{|s|,1,\mathcal{O}}
+ \|h\|_{|s|,-1/2,\Gamma'} \|v_\xi\|_{|s|,1/2,\Gamma'}
$$
By the trace theorem we have $\|v_\xi\|_{|s|,1/2,\Gamma'} \lesssim
\|v_\xi\|_{|s|,1,\mathcal{O}}$. 
Taking the supremum over all $\xi \in H^{1/2}(\partial\mathcal{O})$ yields
(\ref{eq:lemma:transmission_lifting-20}) for the case $h_1 = 0$. 

\emph{Proof of \eqref{eq:lemma:transmission_lifting-20} for $h_1 \ne 0$:}
If $h_1\neq 0$, we lift this contribution separately by solving
the corresponding transmission problem \eqref{eq:transmission_on_cone}.
The estimate follows by applying Lemma~\ref{lemma:apriori_on_cone}
and a suitable localization.
%
\end{proof}

\subsubsection{Corner layers}

Before we can bound the functions used to match boundary layers, we must control the jump between two boundary layer solutions.
We start with a very simple geometric situation.
\begin{lemma}
  \label{lemma:simple_corner_jumps}
  Fix $\omega \in (0,\pi)$.
  Consider the sector $S_{\omega}$ 
  and let $g \in H^1(-1,1)$.  
  Consider the two Cartesian coordinate systems
    $(\zeta_1,\zeta_2)$ and $(\widehat{\zeta}_1,\widehat{\zeta}_2)$,  
    each given by one of the straight sides of the sector and such that the components $\zeta_2$ and $\widehat\zeta_2$ of
the bisector $\{(r,0)\colon r \in (0,1)\}$ are positive.
    In polar coordinates $(r,\varphi)$ (with $r > 0$, $\varphi \in (-\omega,\omega)$) these coordinates are given by
  \begin{align}
    \label{eq:polar-coords}
    \colvec{\zeta_1\\ \zeta_2}
    &=\colvec{r\cos(\varphi+\omega)\\ r \sin(\varphi+\omega)},
    \qquad \colvec{\widehat{\zeta}_1\\\widehat{\zeta}_2}
    =\colvec{-r\cos(\varphi-\omega) \\ -r \sin(\varphi-\omega )}. 
  \end{align}
    
  For $\mu >0$ define
  \begin{align*}
    u_1(\zeta_1,\zeta_2):=g(\zeta_1)e^{- \mu s \zeta_2} \qquad \text{ and } \qquad u_2(\widehat{\zeta}_1,\widehat{\zeta}_2):=g(\widehat{\zeta}_1)e^{- \mu s \widehat{\zeta}_2}.
  \end{align*}
  
\begin{enumerate}[(i)]
\item 
  \label{item:lemma:simple_corner_jumps-i}
  On the line segment $\Gamma':=\{(r,0)\colon r \in (0,1)\}$ 
  the following estimates hold with $d_0:=\operatorname{dist}(\cdot,(0,0)) = r$
  \begin{align}
    \!\!\! 
\norm{u_1 - u_2}_{\abs{s},1/2,\Gamma'} \!+\! \norm{d_{0}^{-1/2}(u_1 - u_2)}_{L^2(\Gamma')}
    &\! \lesssim\! \abs{s}^{-1/2}\norm{g}_{H^{1}(-1,1)}.
      \label{eq:corner_dirichlet_jump}
  \end{align}
\item 
  \label{item:lemma:simple_corner_jumps-ii}
  The normal jump across $\Gamma'$ can be decomposed as 
  \begin{align*}
    \partial_n^- u_1(x) - \partial_n^- u_2(x)& =
    h_1 sg(0) e^{-\mu s \abs{x}} + h_2(x)  \;\text{ with} \;
    \norm{h_2}_{\abs{s},-1/2,\Gamma'}
    \lesssim
      \abs{s}^{-1/2}\norm{g}_{H^{1}(-1,1)} ,
  \end{align*}
  where the orientation of the normal is arbitrarily fixed and $h_1 \in \R$ is
  independent of $g$. 
\item 
  \label{item:lemma:simple_corner_jumps-iii}
There holds $\|e^{-\mu s |x|}\|_{H^{-1/2}(\Gamma')} \lesssim |s|^{-1}$. 
\end{enumerate}
\end{lemma}
\begin{proof}
  We work in polar coordinates, which are related to the coordinates 
$(\zeta_1,\zeta_2)$ and $(\widehat\zeta_1,\widehat\zeta_2)$ by  (\ref{eq:polar-coords}).
  For brevity of notation we introduce the constants $c_1:=\cos(\omega)$, $c_2:=\sin(\omega)$ 
  and note $c_2>0$.  

\emph{Proof of (\ref{item:lemma:simple_corner_jumps-i}):}
  We start with the estimate for the Dirichlet jump and calculate on $\Gamma'$:
  \begin{align*}
    \tracejump{u}(r,\varphi)
    &:=u_1(\zeta_1,\zeta_2)-u_2(\widehat{\zeta}_1,\widehat{\zeta}_2) 
    =\left[g(r c_1) - g(-r c_1)\right]e^{-c_2 \mu s r}.
  \end{align*}
  We estimate:
  \begin{align}
\nonumber 
     \norm{\tracejump{u}}_{L^2(\Gamma')}^2
    &=\int_0^1{\left[g(r c_1)) - g(-r c_1)\right]^2 \,e^{-2 \Re(s) \mu c_2 r} \,dr} \\
    & =\int_{0}^{1}{\left[\int_{-r}^{r}{g'(\tau c_1)} c_1 \,d\tau\right]^2 \,e^{-2 \Re(s) \mu c_2 r} \,dr}  
      \label{eq:tmp1}
\\
\nonumber 
    & \lesssim \int_{0}^{1}{\norm{g'}^2_{L^2(-1,1)} r \,e^{-2 \Re(s) \mu c_2 r}\,dr}
      \lesssim   \frac{1}{\Re(s)^2}  \norm{g'}_{L^2(-1,1)}^2.
  \end{align}
  An analogous computation gives:
  \begin{align*}
    \norm{d_{0}^{-1/2}\tracejump{u}}_{L^2(\Gamma')}^2
    \lesssim  \frac{1}{\Re(s)}  \norm{g'}_{L^2(-1,1)}^2.
  \end{align*}
  Next we compute the tangential derivative of $\tracejump{u}$ on $\Gamma'$:
  \begin{align*}
    \frac{\partial}{\partial r}  \tracejump{u}&= 
    -s \mu c_2 e^{-c_2 \mu s r } \big[g(r c_1) - g(-r c_1)\big] 
                                                + e^{-c_2 \mu s r } c_1\big[g'(r c_1) + g'(- r c_1)\big].                                                
  \end{align*}
  The first term is handled analogously to the $L^2$-term. For the second term we use the crude estimate $\abs{e^{-s  \mu r c_2}}\lesssim 1$
  and get:
  \begin{align}
    \label{eq:tmp2}
    \big\|\frac{\partial}{\partial r} \tracejump{u}\big\|_{L^2(\Gamma')} \lesssim \norm{g'}_{L^2(-1,1)}.
  \end{align}
  Interpolating \eqref{eq:tmp1} and~\eqref{eq:tmp2} then gives~\eqref{eq:corner_dirichlet_jump}.
  
\emph{Proof of (\ref{item:lemma:simple_corner_jumps-ii}):}
  In polar coordinates, the normal derivative on $\Gamma^\prime$ of a function 
is (up to the sign) given by
$ \partial_{n_{\Gamma'}} u = r^{-1}\frac{\partial u}{\partial \varphi}$
  . Thus it is sufficient to estimate the angular derivatives.

  On $\Gamma'$, we calculate for the angular derivative:
  \begin{align*}
    \frac{1}{r}\frac{\partial}{\partial \varphi}{\big(u_1 - u_2\big)}
    &=-c_2 \left[g'(r c_1) - g'(-r c_1)\right] c_2 e^{-s \mu c_2 r} 
      - s \left[g(r c_1) + g(-r c_1)\right] \mu c_1  e^{-s \mu c_2 r}.
  \end{align*}
  
  After substracting the contribution $- 2 s g(0) \mu c_1 e^{-s \mu c_2 r}=:
  h_1 s g(0) e^{-s \mu c_2 r}$ from the second term,  
  the two terms are structurally similar to the derivative of $\tracejump{u}$.
Hence, we analogously get for $h_2:=\normaljump{u} - h_1 s g(0) e^{-s \mu c_2 r}$:
  \begin{align*}
    \norm{h_2}_{L^2(\Gamma')} \lesssim \norm{g}_{H^1(-1,1)}.
  \end{align*}
  To control $\|h_2\|_{|s|,-1/2,\Gamma}$  we calculate for $\xi \in H^{1/2}(\Gamma')$:
  \begin{align*}
    \left| \dualproduct{h_2}{\xi}\right|
    &\lesssim \norm{h_2}_{L^2(\Gamma')}\norm{\xi}_{L^2(\Gamma')}
      \lesssim \abs{s}^{-1/2} \norm{h_2}_{L^2(\Gamma')} \norm{\xi}_{\abs{s},1/2,\Gamma'}.
  \end{align*}
\emph{Proof of (\ref{item:lemma:simple_corner_jumps-iii}):} 
We identify $\Gamma'$ with the interval $(0,1)$. 
A direct calculation shows $\|e^{-\mu s r}\|_{L^2(0,1)} \lesssim |s|^{-1/2}$. 
A test function $v \in H^1_0(0,1)$ can be represented as 
$v(x) = \int_0^r v^\prime(t)\,dt$. Hence, an integration by parts yields 
$$
\left| \int_0^1 e^{-\mu s r} v(r)\,dr \right| 
= \left| \frac{1}{\mu s} \int_0^1 e^{-\mu s r} v^\prime(r)\,dr \right|
\lesssim  |s|^{-3/2} \|v\|_{H^1(0,1)}. 
$$
Thus, $\|e^{-\mu s r}\|_{H^{-1}(0,1)} \lesssim |s|^{-3/2}$. Furthermore, 
we have $\|e^{-\mu s r}\|_{L^2(0,1)} \lesssim |s|^{-1/2}$. 
Interpolation then
yields $\|e^{-\mu s r}\|_{H^{-1/2}(0,1)} \lesssim |s|^{-1}$. 
\end{proof}

\subsubsection{Decomposing the DtN-operator}
In Section~\ref{sec:smooth-gemoetries} we discussed the DtN-operator
for smooth geometries. Here, we study the case of polygonal domains. 
We will do so
by introducing corner layers, similarly to what was done in~\cite[Sec.~{7.4.3}]{melenk_book}.
\begin{figure}
  \begin{subfigure}[t]{0.49\textwidth}
    \includeTikzOrEps{domain} 
    \caption{
      Subdomains for defining the corner layers $u_{CL}$ and boundary layers $u_{BL}$: 
      Solid blue regions indicate $\operatorname{supp}(u_{CL})$. The regions
      $\Omega_j$ on which $u_{BL}$ is defined are confined by the dashed lines.
    \label{fig:geometry_corner_layers} }
  \end{subfigure}
  \hfill
  \begin{subfigure}[t]{0.49\textwidth}
  \centering
  \includeTikzOrEps{corner}
  \caption{Situation at a corner $A_j$. 
     Marked in color is  the
      support of the cut-off functions $\chi_{BL}$;
      solid green: $\Omega_j$, dotted red: $\Omega_{j+1}$. 
  \label{fig:corner_geometry}}
\end{subfigure}
\caption{Boundary layer and corner layer construction for nonsmooth domains}
\end{figure}

The following Theorem~\ref{thm:decomposition_for_polygons} presents a
decomposition of the DtN-Operator into several contributions. To describe
them, we need some notation as illustrated in Fig.~\ref{fig:corner_geometry}. 
The polygon $\Omega$ has vertices 
$A_1,\ldots,A_J$ and edge $\Gamma_j$ connects $A_j$ with $A_{j+1}$ 
(we set $A_{J+1}:= A_1$ and $\Gamma_{J+1}:= \Gamma_1$ and, for simplicity of notation, we assume that 
$\partial\Omega$ consists of a single component of connectedness). 
$\Gamma_j^\prime$ is the bisector of the angle at vertex $A_j$. 
The subdomains $\Omega_{j}$ are confined by four curves: $\Gamma_{j}$, 
the bisectors at $A_j$ and $A_{j+1}$ (dashed black in Figure~\ref{fig:geometry_corner_layers}), and a fourth curve completely contained
in $\Omega$ and sufficiently close to $\Gamma_{j}$ (dashed blue in  Figure~\ref{fig:geometry_corner_layers}) . 
We set $\Omega_0:= \Omega \setminus \cup_{i=1}^J \overline\Omega_i$ and, for convenience
$\Omega_{J+1}:= \Omega_1$.  We fix
$\chi_{BL} \in C^\infty(\R^2)$ with $\operatorname{supp} \chi_{BL} \subset 
\overline{\cup_i \Omega_i}$ and $\chi_{BL} \equiv 1$ near $\Gamma$. 
Finally, for each vertex $j$ we let $\chi_{CL,j} \in C^\infty(\R^2)$ 
be a cut-off function with $\operatorname{supp} \chi_{CL,j} \cap \{A_{j'} \} = 
\emptyset$ for $j' \ne j$ such that 
$\chi_{CL,j} \equiv 1$ on 
$\Gamma_j^\prime \cap \{x \in \Omega\,|\, 
\chi_{BL}(x)  = 1\}$. 

\begin{theorem}
  \label{thm:decomposition_for_polygons}
  Let $\Omega^- \subseteq \R^2$ be a polygon, $s \in \mathscr{S}$.
Let $g \in H^1(\Gamma)$ with $g|_{\Gamma_i} \in H^2(\Gamma_i)$. 
  Let $u$ solve
  $$
-\laplace u + s^2 u =0 \quad \text{ in $\Omega^-$}, \qquad \gamma^- u =g \quad \text{on $\Gamma$}. 
 $$ 
  Then $u$ can be decomposed as 
  $u=\chi_{BL} u_{BL} + \sum_{j=1}^J \chi_{CL,j} u_{CL,j} + r$ such that
for a $C > 0$ depending only on $\Omega$: 
  \begin{enumerate}[(i)]
  \item
    \label{it:dti_for_polygons}
    $u_{BL}|_{\Omega_i} \in H^2(\Omega_i)$ and 
$\|u_{BL}\|_{|s|,1,\Omega_i} \leq C  |s|^{1/2} \|g\|_{H^1(\Gamma_i)}$ 
for each $i=0,\ldots,J$. Additionally,  
    $\partial_n^- u_{BL} - s \gamma^- u_{BL} =0$.    
  \item
    \label{it:corner_layer_est_for_polygons}
For each $j \in \{1,\ldots,J\}$ the function $u_{CL,j}$ is in $H^1(\Omega_i)$ for each $i=0,\ldots,J$ and 
$u_{CL,j}|_{\Gamma} = 0$. 
Furthermore, $-\Delta u_{CL,j} + s^2 u_{CL,j} = 0$ on 
$\Omega_j \cup \Omega_{j+1}$ and 
$\partial_n \chi_{CL,j} u_{CL,j}$ exists on each edge $\Gamma_i$, $i =1,\ldots,J$, and 
    \begin{align*}
      \norm{\partial_n^- (\chi_{CL,j} u_{CL,j}) - s \gamma^- \chi_{CL,j}u_{CL,j} }_{H^{-1/2}(\Gamma)} \lesssim C \norm{g}_{H^1(\Gamma)}.
    \end{align*}    
Furthermore, $\sum_{j=1}^J \|u_{CL,j}\|_{|s|,1,\Omega_j \cup \Omega_{j+1}} \leq  C \sqrt{\log(s+2)} \|g\|_{H^1(\Gamma)}$. 
  \item
    \label{it:remainder_est_for_polygons}
    The remainder $r$ satisfies 
    \begin{align*}
      \norm{\partial_n^- r - s \gamma^- r }_{H^{-1/2}(\Gamma)} &\leq C \,\Big( \norm{g}_{H^1(\Gamma)} + |s|^{-1} \sum_{j=1}^J \|g\|_{H^{2}(\Gamma_j)}\Big). 
    \end{align*}
  \end{enumerate}
  The analogous statement holds for the exterior problem upon replacing $s$ by $-s$ in~(\ref{it:dti_for_polygons})-(\ref{it:remainder_est_for_polygons}).
\end{theorem}
\begin{proof}  
\emph{1.~step (construction of $u_{BL}$):} 
For each $\Omega_i$, let $(\theta_i,\rho_i)$ be the boundary fitted
coordinates obtained by an affine parametrization of  the line that contains $\Gamma_i$ 
by $\theta_i$ and denoting by $\rho_i$ the (signed) distance from that line. 
Write $\widehat g(\theta_i)$ for the function $g$ on $\Gamma_i$ in
the coordinates $(\theta_i,\rho_i)$ and extend it $H^1$ and $H^2$-stable
to the line. We define, 
in boundary fitted coordinates $(\theta_i,\rho_i)$, the function 
$u_{BL}(\theta_i,\rho_i):=\widehat g(\theta_i) e^{-s \rho_i}$. 
That is, the function $u_{BL}$ is given by applying the construction 
from Lemma~\ref{lemma:asympt_expansion}. We have by construction 
$\partial_n u_{BL} - s u_{BL} = 0$ on $\Gamma$ and 
$\|u_{BL}\|_{|s|,1,\Omega_i} \lesssim \sqrt{|s|} \|g\|_{H^1(\Gamma_i)}$. 

\emph{2.~step (construction of $u_{CL}$):} 
The function $u_{BL}$ is discontinuous across the bisectors 
$\Gamma_j^\prime$. The corner layers $u_{CL,j}$ corrects this. Focussing 
on the bisector $\Gamma_j^\prime$, let 
$\Gamma_j$ and $\Gamma_{j+1}$ be the edges meeting at $A_j$. 
Fix $\kappa$ such that $\chi_{BL} u_{BL} \equiv 0$ on 
${\Gamma^\prime \cap \R^2\setminus B_\kappa(A_j)}$.  
On the sector $S_{\omega}=B_{\kappa}(A_j) \cap \Omega^- $ define
$u_{CL,j}$ as the solution of the following transmission problem: 
  \begin{align*}
    -\laplace u_{CL,j} + s^2 u_{CL,j}  & = 0 \text{ on $S_{\omega} \setminus (0,\kappa)\times\{0\}$}, \qquad \;u=0\text{ on } \partial S_{\omega}, \\
    \tracejump{u_{CL,j}}&=-\tracejump{(\chi_{BL} u_{BL})}, \quad \text{ and } \\
    \normaljump{u_{CL,j}} & = -\normaljump{ (\chi_{BL} u_{BL})} \; \text{ on $\Gamma'_j\cap B_\kappa(A_j)$}.
  \end{align*}
Up to translation and rotation, we are essentially in the setting of 
  Lemmas~\ref{lemma:transmission_lifting} and \ref{lemma:simple_corner_jumps}. 
That is, on $\Omega_j$ and $\Omega_{j+1}$ the function $u_{BL}$ has the 
form given in Lemma~\ref{lemma:simple_corner_jumps} so that (taking
additionally the effect of $\chi_{BL}$ into account) we arrive at
\begin{align*}
& \|\tracejump{(\chi_{BL} u_{BL})}\|_{|s|,1/2,\Gamma_j^\prime \cap B_\kappa(A_j)} 
+ \|r_j^{-1/2} 
\tracejump{(\chi_{BL} u_{BL})}\|_{L^2(\Gamma_j^\prime \cap B_\kappa(A_j))}  \\
  &\qquad \lesssim |s|^{-1/2} \|g\|_{H^1(\Gamma_j \cup \Gamma_{j+1})}, 
\end{align*}
where $r_j = \operatorname{dist}(\cdot,A_j)$.
For the normal derivative,  
Lemma~\ref{lemma:simple_corner_jumps} provides the representation 
$\normaljump{ (\chi_{BL} u_{BL}) }(x)= h_1  s g(0) e^{-s \mu \abs{x}}+ h_2(x)$ with 
\begin{align*}
  \abs{h_1 g(0)}&\lesssim  \|g\|_{H^1(\Gamma_j \cup \Gamma_{j+1})} \text{ and }  
              \|h_2\|_{|s|,-1/2,\Gamma_j^\prime \cap B_\kappa(A_j)} \lesssim |s|^{-1/2} \|g\|_{H^1(\Gamma_j \cup \Gamma_{j+1})}.  
\end{align*}
By Lemma~\ref{lemma:transmission_lifting}, we therefore get 
\begin{align*}
\|u_{CL,j}\|_{|s|,1,\Omega_j \cap B_{\kappa}(A_j)} + 
\|u_{CL,j}\|_{|s|,1,\Omega_{j+1} \cap B_{\kappa}(A_j)} 
\lesssim  \|g\|_{H^1(\Gamma_j \cup \Gamma_{j+1})}. 
\end{align*}
Furthermore, Lemma~\ref{lemma:transmission_lifting} provides for 
$\partial_n (\chi_{CL,j} u_{CL,j})$ on 
$(\Gamma_j \cup \Gamma_{j+1} ) \cap B_{\kappa}(A_j)$ the bound 
\begin{align*}
\|\partial_n (\chi_{CL,j} u_{CL,j}) \|_{H^{-1/2}(\Gamma_j \cup \Gamma_{j+1})} 
\lesssim \sqrt{\log(\abs{s}+2)}\|g\|_{H^1(\Gamma_j \cup \Gamma_{j+1})}. 
\end{align*} 
 Noting that $u_{CL,j} = 0$ on $\Gamma$, the asserting of 
    (\ref{it:corner_layer_est_for_polygons}) follows.

\emph{3.~Step (Construction of $r$):} 
The function $r \in H^1_0(\Omega^-)$ is defined as
$r:= u - u_{BL} - \sum_{j=1}^J \chi_{CL,j} u_{CL,j}$. It satisfies the 
equation 
  $
  - \laplace r + s^2 r= f
  $
  with $f$ satisfying 
$$
\|f\|_{L^2(\Omega^-)}  \lesssim |s|^{1/2} \|g\|_{H^1(\Gamma)} + 
|s|^{-1/2} \sum_{i=1}^J \|g\|_{H^2(\Gamma_i)}. 
$$
The bounds of Lemma~\ref{lemma:trace_estimates} and \ref{lemma:lifting_and_apriori} then conclude the proof. 

\emph{4.~Step (exterior domains):} 
  The result for the exterior problem follows along the same lines.
\end{proof}
{}From Theorem~\ref{thm:decomposition_for_polygons} we deduce the 
following result for the DtN operator under slightly lower regularity
requirements: 
\begin{corollary}
\label{cor:decomposition_for_polygon}
Let $\Omega\subset \R^2$ b a polygon and $s \in \mathscr{S}$. 
Let $g \in H^1(\Gamma)$ and $u$ solve 
$$
-\Delta u + s^2 u = 0 \quad \mbox{ in $\Omega^-$}, 
\qquad \gamma^- u = g \quad \mbox{ on $\Gamma$.} 
$$
Then 
$$
\|\partial_n u - s u\|_{H^{-1/2}(\Gamma)} \lesssim \sqrt{\log(\abs{s}+2)}\|g\|_{H^1(\Gamma)}. 
$$
The analogous statement holds for the exterior problem upon replacing $s$ by $-s$.
\end{corollary}
\begin{proof}
We employ the smoothing technique as in 
Corollary~\ref{cor:asymptotic_exp_after_smoothing}: By smoothing, 
on a length scale $|s| \ge 1$ or by interpolation, one can construct a function 
$\widetilde g \in H^1(\Gamma)$ such that $\widetilde g \in H^2(\Gamma_i)$
for each edge $\Gamma_i$ and such that 
$$
|s|^{-1/2} \sum_{i=1}^J \|\widetilde g\|_{H^2(\Gamma_i)} 
+ |s| \|g - \widetilde g\|_{L^2(\Gamma)} + \|\widetilde g\|_{H^1(\Gamma)}
\lesssim \|g\|_{H^1(\Gamma)}. 
$$
This can be done in two steps: first, one defines the approximation
edgewise and in a second step ensure continuity at the vertices of $\Omega$
by introducing an appropriate correction, e.g., by a piecewise linear function.
The remainder of the proof is then as in Corollary~\ref{cor:asymptotic_exp_after_smoothing}.
\end{proof}

  

\section{Numerical Examples}
\label{sect:numerics}
In this section, we compare the performance of the numerical schemes of Theorem~\ref{thm:convergence_cq} 
with the more standard method of Proposition~\ref{prop:standard_method} for an interior scattering problem.
That is, we compare the Runge-Kutta convolution quadrature approximation by the following two methods: 
\begin{itemize}
\item[$\bullet$]
 $[\dtn^-(\dd)] {\uinc}$, which is denoted ``standard method'', and   
\item[$\bullet$] 
 $[\ddinv \dtn^-(\dd)] {\duinc}$, which is denoted ``differentiated method''. 
\end{itemize}
%

We use two different Runge-Kutta methods of the Radau IIA family, one with 3 and one with 5 stages.
For the 3-stage version, we have $q=3$ and $p=5$. We therefore expect a convergence rate
of order $3$ for the standard method and full classical order $5$
up to logarithmic terms for the differentiated scheme.

In order to show that our theoretical estimates are sharp, we also look at the 5-stage method. There,
the stage order is $q=5$ and the classical order $p=9$. The expected rates are therefore $5$ and $7$ respectively
for the two numerical schemes up to logarithmic terms.

For simplicity, we consider the interior scattering problem and prescribe an exact solution as
the travelling wave
$$
u(x,t):=\psi(\mathbf{d}\cdot x - t) \qquad \text{with} \quad \psi(\tau):=\cos\left(\frac{\pi\,\tau}{2}\right)\, e^{-\frac{(\tau - \tau_0)^2}{\alpha}}.
$$
The wave direction is selected as $\mathbf{d}:=(\frac{1}{\sqrt{2}},\frac{1}{\sqrt{2}})$, 
and the other parameters were $\tau_0:=4$ and $\alpha:=0.05$. We integrated until the end time $T=12$.
In order to show that the method works with the predicted rates, even for non-convex geometries, we consider the classical L-shaped geometry,
given by the vertices
$$
  (0.5,0),\;
  (1,0),\;
  (1,1),\:
  (0,1),\;
  (0,0.5),\;
  (0.5,0.5).
$$

As the space discretization, we employ a Galerkin boundary element method of order $5$, 
based on a code developed by F.-J. Sayas and his group at the University of Delaware.  A sufficiently refined grid is employed
to be able to focus on the temporal error. 
Instead of evaluating the  $H^{-1/2}$-error, we compute the quantity
\begin{align*}
  \max_{j=0,\dots n}\sqrt{\dualproduct{V(1)e_j}{e_j}} \qquad \text{ with } \qquad e_j:=\Pi_{L^2} \lambda(t_j) - \lambda^{k}(t_j).
\end{align*}
Here $\Pi_{L^2}$ denotes the $L^2$-orthogonal projection onto the BEM space. Since the grid is sufficiently fine and fixed,
this should not impact the observed convergence rates.
The operator $V(1)$ was taken because it gives an ($s$-independent) equivalent norm
  on $H^{-1/2}(\Gamma)$.

\begin{figure}
  \begin{subfigure}[b]{0.5\textwidth}
    \includeTikzOrEps{conv_r3}
    \caption{Comparison of the standard and differentiated method for 3-stage Radau IIA}
  \end{subfigure}
\hfill
  \begin{subfigure}[b]{0.5\textwidth}
    \includeTikzOrEps{conv_r5}
    \caption{Comparison of the standard and differentiated method for 5-stage Radau IIA}
  \end{subfigure}
  \caption{Comparison of the standard and differentiated method for different RK schemes}
  \label{fig:comparison_of_methods}
\end{figure}

In Figure~\ref{fig:comparison_of_methods}, we observe that the rates from Proposition~\ref{prop:standard_method} and Theorem~\ref{thm:convergence_cq} are
obtained as predicted.
We conclude that while the fact that the rate jumps by order $2$, even though the modification of the scheme is of order one,
is  at first surprising, this can be rigorously explained by Theorem~\ref{thm:convergence_cq}. Observations of
this type provided the main motivation for the investigations in this work.

\textbf{Acknowledgments:} The authors gratefully acknowledge financial support by the Austrian Science Fund (FWF) through
the research program ``Taming complexity in partial differential systems'' (grant SFB F65).
\appendix
\section{Norm equivalence of interpolation spaces}
\begin{lemma}
\label{lemma:norm-equivalence}
Let ${\mathcal O}$ be a bounded domain. For $\rho > 0$ and $\theta \in (0,1)$ let $\|\cdot\|_{\rho,\theta,\mathcal{O}}$ be defined as in 
Definition~\ref{def:weighted_norms}. Define 
$$
\|u\|_{\theta,\mathcal{O}}:= \|u\|_{1,\theta,\mathcal{O}}, 
\qquad 
|u|_{\theta,\mathcal{O}}:= \|u - \overline{u}\|_{1,\theta,\mathcal{O}}, 
\qquad \overline{u}:= \frac{1}{|\mathcal{O}|} \int_{\mathcal{O}} u. 
$$
Then there are constants $c_1$, $c_2$ depending only on $\mathcal{O}$ and $\theta$ such that for all $u \in H^\theta(\mathcal{O})$ 
\begin{equation}
\label{eq:lemma:norm-equivalence}
c_1 \left( \rho^\theta\|u\|_{L^2(\mathcal{O})} + |u|_{\theta,\mathcal{O}}\right) \leq 
\|u \|_{\rho,\theta,\mathcal{O}} 
\leq c_2 \left( \rho^\theta\|u\|_{L^2(\mathcal{O})} + |u|_{\theta,\mathcal{O}}\right).
\end{equation}
\end{lemma}
\begin{proof}
We use \cite[Lemma~{4.1}]{karkulik-melenk-rieder20} with $X_0 = (L^2(\mathcal{O}), \|\cdot\|_{L^2(\mathcal{O})})$ 
and $X_1 = (H^1(\mathcal{O}), {\|\cdot\|_{1,\mathcal{O}}})$ there. As given there, we set 
$K(u,t):= \inf_{v \in H^1} {\|u - v\|_{L^2}} + t\|v\|_{H^1}$ as well as 
$k(u,t):= \inf_{v \in H^1} {\|u - v\|_{L^2}} + t|v|_{H^1}$, where we omitted the argument $\mathcal{O}$ for brevity. 
In \cite[Lemma~{4.1}]{karkulik-melenk-rieder20} the interpolation norm 
$\|\cdot\|_{\theta}$ based on $K$ and the interpolation seminorm $|\cdot|_{\theta}$ based on $k$. 
We note that $\|\cdot\|_\theta \sim \|\cdot\|_{\theta,\mathcal{O}}$. 

\emph{1.~step:} We claim $\|u - \overline{u}\|_{\theta} \sim |u|_\theta$. This claim follows from the following two estimates
using the Poincar\'e inequality: 
\begin{align*} 
K(u-\overline{u},t) & 
= \inf_{v \in H^1} \|u - \overline{u} - v\|_{L^2} + t\|v\|_{H^1} 
= \inf_{v \in H^1, c \in \R} \|u - \overline{u} - (v-c)\|_{L^2} + t\|v-c\|_{H^1}  \\
& \lesssim \inf_{v \in H^1} \|u - \overline{u} - (v-\overline{v})\|_{L^2} + t|v|_{H^1}  
= \inf_{v \in H^1, c \in \R} \|u - v - c\|_{L^2} + t|v|_{H^1}   \\
&=  \inf_{v \in H^1} \|u - v \|_{L^2} + t|v|_{H^1}  = k(u,t). 
\end{align*} 
Conversely, 
\begin{align*}
k(u,t) & = \inf_{v \in H^1,c\in \R} \|u - v - c\|_{L^2} + t |v|_{H^1}  
\leq  \inf_{v\in H^1} \|u - \overline{u} - v \|_{L^2} + t |v|_{H^1}   \\
& \leq  \inf_{v \in H^1} \|u - \overline{u} - v \|_{L^2} + t \|v\|_{H^1}  
 = K(u-\overline{u},t). 
\end{align*}
\emph{2.~step:} The norm equivalence \eqref{eq:lemma:norm-equivalence} follows from \cite[Lemma~{4.1}]{karkulik-melenk-rieder20}. 
\end{proof}
\bibliographystyle{alphaabbr}
\bibliography{literature}
\end{document}